\tikzset{
	->,
	> = {Stealth[length=5pt]},
	node distance = 2cm,
	every state/.style = {rectangle, line width=0.35mm, minimum size=13pt, inner sep=0.65pt, font=\scriptsize},
	every node/.style = {font=\scriptsize},
	initial text = $ $
}
\renewcommand{\d}{\boldsymbol{\mathbf{d}}}
\renewcommand{\r}{\boldsymbol{\mathbf{r}}}
\newcommand{\N}{\mathbbm{N}}							
\newcommand{\Z}{\mathbbm{Z}}							
\newcommand{\Q}{\mathbbm{Q}}							
\newcommand{\R}{\mathbbm{R}}							
\newcommand{\D}{\mathcal{D}}
\newcommand{\A}{\mathcal{A}}
\newcommand{\sN}{\mathcal{N}}
\newcommand{\sR}{\mathcal{R}}
\newcommand{\brk}[1]{\left[ #1 \right]}
\newcommand{\brc}[1]{\left\{ #1 \right\}}
\newcommand{\abs}[1]{\left| #1 \right|}
\newcommand{\pren}[1]{\left( #1 \right)}
\newcommand{\flr}[1]{\left\lfloor #1 \right\rfloor}
\newcommand{\ol}[1]{\overline{#1}}
\newcommand{\mbf}[1]{\mathbf{#1}}
\let\oldlim\lim
\renewcommand{\lim}[3]{\oldlim\limits_{#1 \to #2}{#3}}
\setlist{leftmargin=*}											
\DeclareMathOperator{\tr}{tr}
\DeclareMathOperator{\Mod}{mod}
\newtheorem{theorem}{Theorem}
\newtheorem{lemma}[theorem]{Lemma}
\newtheorem{proposition}[theorem]{Proposition}
\theoremstyle{definition}
\newtheorem*{definition}{Definition}
\newcommand{\bmath}[1]{\boldsymbol{\mathbf{#1}}}
\newcommand{\e}{\mathbf{e}}
\tikzset{
	->,
	> = {Stealth[length=5pt]},
	node distance = 4cm,
	every state/.style = {circle, line width=0.35mm, minimum size=13pt, inner sep=3pt, font=\footnotesize},
	every node/.style = {font=\footnotesize},
	initial text = $ $,
}
\begin{document}

\author[A.G.R.~Cruz]{Anjelo Gabriel R.~Cruz}
\address[A.G.R.~Cruz]{Institute of Mathematics, University of the Philippines Diliman, 1101 Quezon City, Philippines}
\email{agcruz@math.upd.edu.ph}

\author[M.J.C.~Loquias]{Manuel Joseph C.~Loquias}
\address[M.J.C.~Loquias]{Institute of Mathematics, University of the Philippines Diliman, 1101 Quezon City, Philippines}
\email{mjcloquias@math.upd.edu.ph}

\author[J.M.~Thuswaldner]{J\"org M.~Thuswaldner}
\address[J.M.~Thuswaldner]{Chair of Mathematics and Statistics, University of Leoben, Franz-Josef-Strasse 18, A-8700 Leoben, Austria}
\email{joerg.thuswaldner@unileoben.ac.at}
	
\thanks{The authors are supported by the ANR-FWF grant I 6750. A.G.R.~Cruz is grateful to the University of the Philippines System for 	financial support through its Faculty, REPS, and Administrative Staff Development Program.}

\subjclass[2020]{11A63, 68Q45}

\keywords{matrix number systems, dynamical systems, automata, radix representations}
	
\title[Digit Systems for Expanding Rational  Matrices]{Addition automata and attractors of Digit Systems Corresponding to Expanding Rational Matrices}

\begin{abstract}
Let $A$ be an expanding $2 \times 2$ matrix with rational entries 
and $\Z^2[A]$ be the smallest $A$-invariant $\Z$-module containing $\Z^2$. Let $\D$ be a finite subset of $\Z^2[A]$ which is a complete residue system of $\Z^2[A]/A\Z^2[A]$. The pair $(A,\mathcal{D})$ is called a {\em digit system} with {\em base} $A$ and {\em digit set} $\mathcal{D}$. It is well known that every vector $x \in \Z^2[A]$ can be written uniquely in the form
\[ 
x = d_0 + Ad_1 + \cdots + A^kd_k + A^{k+1}p,
\]
with $k\in \N$ minimal, $d_0,\dots,d_k \in \mathcal{D}$, and $p$ taken from a finite set of {\em periodic elements}, the so-called {\em attractor} of $(A,\mathcal{D})$. If $p$ can always be chosen to be $0$ we say that $(A,\mathcal{D})$ has the {\em finiteness property}.

In the present paper we  introduce finite-state transducer automata which realize the addition of the vectors $\pm(1,0)^\top$ and $\pm(0,1)^\top$ to a given vector $x\in \Z^2[A]$ in a number system $(A,\mathcal{D})$ with collinear digit set. These automata are applied to characterize all pairs $(A,\mathcal{D})$ that have the finiteness property and, more generally, to characterize the attractors of these digit systems. 
\end{abstract}

\maketitle

\section{Introduction}

Let $A$ be an $n \times n$ invertible matrix with integer entries and let $\D$ be a finite subset of $\Z^n$. The  pair $(A,\D)$ is called a \textit{digit system} (or \textit{number system}) with base~$A$ and digit set $\D$.
If a given vector $x \in \Z^n$ admits an expansion of the form
$x = d_0 + Ad_1 + \cdots + A^k d_k$,	
where $k$ is a nonnegative integer and $d_0, d_1, \dots, d_k \in \D$, we say that $x$ has a finite 
expansion in the digit system $(A,\D)$. If every $x \in \Z^n$ can be expressed uniquely in such a form, then $(A,\D)$ is said to have the \textit{finiteness} and \textit{uniqueness properties}. Such digit systems  and their properties have been studied extensively in the literature, for instance in \cite{german}, \cite{katai}, \cite{kovacs}, and \cite{kov-lattice}. Their properties can also be viewed geometrically which leads to a tiling theory for so-called \textit{self-affine tiles}, see for instance~\cite{LW96} and \cite{LW97}. 

The concept of a number system with a matrix base can be extended to rational matrices. Let $A\in\Q^{n\times n}$ be invertible. Then the set of representable vectors is not just $\Z^n$ but
\[
\Z^n[A] = \bigcup_{k=0}^\infty (\Z^n + A\Z^n + \cdots + A^k\Z^n).
\] 
That is, $\Z^n[A]$ consists of all vectors $x \in \Q^n$ of the form
$   x = x_0 + Ax_1 + \cdots + A^\ell x_\ell$, where $\ell$ is a nonnegative integer and $x_0, x_1, \dots, x_\ell \in \Z^n$. The collection $\Z^n[A]$ can also be seen as the smallest nontrivial $A$-invariant $\Z$-module containing $\Z^n$. If $\D \subseteq \Z^n[A]$ is finite we call $(A,\D)$ again a \textit{digit system} with base~$A$ and digit set $\D$. (Note that if $A$ has integer entries, we have $\Z^n[A] =\Z^n$ and everything reduces to the case discussed before.) In \cite{jankauskas} and \cite{jankauskas-ii}, Jankauskas and Thuswaldner gave sufficient and necessary conditions to guarantee the existence of a finite set $\D \subseteq \Z^n[A]$ for which every $x \in \Z^n[A]$ can be expressed in the form 
\begin{equation} \label{eq:expansionD}
x = d_0 + Ad_1 + \cdots + A^k d_k,
\end{equation}
where $k$ is a nonnegative integer and $d_0, d_1, \dots, d_k \in \D$. The pair $(A,\D)$ is then called a digit system having the \textit{finiteness property}. If, in addition, $\D$ is a complete residue system of $\Z^n[A]/A\Z^n[A]$, then the representation of $x$ in (\ref{eq:expansionD}) is unique if $k \in \N$ is chosen minimally. In this case, $(A,\D)$ is said to have the \textit{uniqueness property}. The special case where $A$ is a $1 \times 1$ matrix produces a digit system whose base is a rational number $p/q$, where $p$ and $q$ are coprime integers with $\abs{p} > \abs{q}$. Such a digit system and its automaticity properties have been studied extensively by Akiyama et al.~in \cite{frac-akiyama}, \cite{auto-akiyama}, and \cite{subt-akiyama}.

Assume that the $n \times n$ matrix $A$ is \textit{expanding}, {\it i.e.}, that all eigenvalues of $A$ must have modulus greater than 1, and that the digit set $\D$ is a finite subset of $\Z^n[A]$ which is a complete residue system of $\Z^n[A]/A\Z^n[A]$. In this case one can show that the digit system $(A,\mathcal{D})$ is close to the finiteness property in the sense that it admits a finite {\em attractor} $\mathcal{A}$ such that each $x\in \Z^n[A]$ can be written in the form 
\begin{equation} \label{eq:expansionD}
x = d_0 + Ad_1 + \cdots + A^k d_k + A^{k+1} p,
\end{equation}
where $k$ is a nonnegative integer, $d_0,\dots,d_k \in \mathcal{D}$, and $p\in \mathcal{A}$. In this paper, we investigate digit systems with these properties. In particular, we concentrate on the case where $A$ is a $2 \times 2$ expanding matrix and choose the digit set to be collinear. We introduce finite-state transducer automata which realize the addition of the vectors $\pm(1,0)^\top$ and $\pm(0,1)^\top$ to a given vector $x\in \Z^2[A]$ on the digit expansion \eqref{eq:expansionD} of $x$. These addition automata, which are of interest in their own right, can be used to determine whether $(A,\mathcal{D})$ has the finiteness property and, more generally, to characterize the attractor of $(A,\mathcal{D})$. Indeed, we are able to provide  these automata explicitly and use them to characterize the finiteness property and the attractors for this whole class of digit systems. The shape of our digit sets allows us to relate our matrix digit systems to other kinds of digit systems, namely to canonical number systems (CNS) and shift radix systems (SRS). Canonical number systems have a long history that goes back to Knuth \cite{knuth-art}, and they were first studied systematically by K\'{a}tai and his coworkers in \cite{katai-comp-int} for Gaussian integers, \cite{katai-quad-alg} for real quadratic number fields, and \cite{katai-im-quad} for imaginary quadratic number fields. They were later studied in greater generality in \cite{kov-int-ring}, \cite{kov-int-dom}, \cite{petho}, and in the ``nonmonic'' case that is particularly interesting for us, in \cite{scheier}. On the other hand, shift radix systems were invented in \cite{srs-akiyama} and have been studied more extensively in \cite{berthe} and \cite{srs-kirsch}. They are known to contain nonmonic canonical number systems as special cases; see \cite{berthe}.

\section{Arithmetics of Matrix Digit Systems}

We first recall the theory of matrix digit systems as developed in \cite{jankauskas} and \cite{jankauskas-ii}. Suppose $A \in \Q^{n\times n}$ is expanding. Then $\Z^n[A] / A\Z^n[A]$ is a finite abelian group as was shown for instance in \cite{jankauskas-ii}. Let $\D \subseteq \Z^n[A]$ be a complete system of residues of $\Z^n[A] / A\Z^n[A]$. 
To enable positional arithmetic computations, we also assume that $\D$ contains the zero vector $0 = (0,\dots,0)^\top$. The \textit{digit function} is defined to be the mapping $\d : \Z^n[A] \to \D$ which sends a given vector $x \in \Z^n[A]$ to the unique element $\d(x) \in \D$ satisfying \[x \equiv \d(x) \pmod{A\Z^n[A]},\]
i.e., $x - \d(x) \in A\Z^n[A]$. Thus, $\d$ sends the vector $x$ to its ``remainder" in $\D$. Define the dynamical system $\Phi : \Z^n[A] \to \Z^n[A]$ associated to $\d$ by \[\Phi(x) = A^{-1}\big(x - \d(x)\big).\]
Recall that the \textit{attractor} $\A$ of $\Phi$ is the smallest subset of $\Z^n[A]$ such that $\Phi(\A) \subseteq \A$ and for each $x \in \Z^n[A]$, there exists $k \in \N$ such that $\Phi^k(x) \in \A$. Such a set $\A$ is unique and is denoted by $\A_\Phi$.

It has been shown in \cite[Proposition 2.5]{jankauskas-ii} that $\A_\Phi$ is finite whenever $A$ is expanding. In this case, $\A_\Phi$ consists of all \textit{periodic points} of $\Phi$, i.e., vectors $p \in \Z^n[A]$ such that $\Phi^r(p) = p$ for some $r \in \Z^+$. Moreover, each $x \in \Z^n[A]$ can be written uniquely in the form
\begin{equation} \label{eq:attractor}
    x = \sum_{j=0}^{n-1} A^jd_j + A^{n}p
\end{equation}
for some minimal $n \in \N$, $d_0,\dots,d_{n-1} \in \D$, and $p \in \A_\Phi$. In fact, $n$ is the smallest positive integer for which $\Phi^{n}(x) \in \A$, $d_j = (\d \circ \Phi^j)(x)$ for all $j \in \brc{0,\dots,n-1}$, and $p = \Phi^{n}(x)$.

Because $p$ is a periodic point, there is a smallest positive integer $r$ such that $\Phi^r(p) = p$. Hence there exist unique $b_0,\dots,b_{r-1} \in \D$ such that $p = \sum\limits_{k=0}^{r-1} A^kb_k + A^rp$. Iterating this equation for arbitrarily large $N$ gives
\[
p = \sum_{k=0}^{Nr-1} A^kb_{k \Mod r} + A^{Nr}p.
\]
Hence, by substituting to $p$ in (\ref{eq:attractor}), $x$ may be written in the form 
\begin{equation} \label{eq:padic}
    x = \sum_{j=0}^{n-1} A^jd_j + A^n\sum_{k=0}^{Nr-1} A^kb_{k \Mod r} + A^{Nr+n}p =: (\sigma)_A,
\end{equation}
where $\sigma$ is the infinite word $(b_{r-1} \cdots b_0)^\infty d_{n-1} \cdots d_0$. This expansion is called the \textit{$A$-adic representation} of $x$. As a consequence of (\ref{eq:padic}), each $x \in \Z^n[A]$ corresponds to a unique infinite word of the form $\sigma = (b_{r-1} \cdots b_0)^\infty d_{n-1} \cdots d_0$ over $\D$ for which $(\sigma)_A = x$. 
If $0\in\A_\Phi$, then a vector $x\in\Z^n[A]$ corresponding to the infinite word $0^\infty d_{n-1} \cdots d_0$ can be represented by the finite word $d_{n-1} \cdots d_0$ over $\D$,  and we say that $x$ has a finite expansion.
In particular, if $\A_\Phi=\brc{0}$ then the matrix digit system $(A,\D)$ has the finiteness property.

\section{Statement of the Main Results}
\label{SMR}

For the remainder of this paper, we will focus on the case where $n = 2$. Let us consider a $2 \times 2$ matrix of the form 
\[A = A(\alpha,\beta) =
\begin{bmatrix} 
    0 & -\beta \\ 
    1 & -\alpha 
\end{bmatrix},\]
where $\alpha, \beta \in \Q$. Observe that $A$ is precisely the companion matrix of the polynomial $p(x) = x^2 + \alpha x + \beta \in \Q[x]$. This turns out to be a convenient choice for $A$ as any other nonscalar matrix $B$ with characteristic polynomial $p(x)$ has $A$ as its Frobenius normal form.

Throughout this paper, we assume that $p(x)$ is irreducible over $\Q$. Furthermore, we assume that $A$ is expanding in order to ensure that the attractor of $\Phi$ is finite. We also denote by $c$ the (unique) positive integer such that $cp(x) = cx^2 + ax + b$ is a primitive polynomial with integer coefficients (here, $a = c\alpha$ and $b = c\beta$). We also use $q(x)$ to denote the polynomial $cp(x)$. Because $q(x)$ is irreducible in $\Q[x]$, it follows that $\abs{\Z^2[A]/A\Z^2[A]} = \abs{b}$ \cite[Proposition 2.2]{rossi}.

We begin our discussion with the following result. The set $\D$ defined here will serve as our digit set for the rest of the paper.

\begin{proposition} \label{prop:crs}
Suppose $A = A(\alpha,\beta)$ is the companion matrix of the irreducible polynomial $p(x) = x^2 + \alpha x + \beta \in \Q[x]$. Then the set $\D = \brc{(0,0)^\top, (1,0)^\top, \dots, (\abs{b}-1,0)^\top}$ is a complete residue system of $\Z^2[A]/A\Z^2[A]$.
\end{proposition}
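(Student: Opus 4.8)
The plan is to exploit the fact, already recorded above, that the quotient group $\Z^2[A]/A\Z^2[A]$ has order exactly $\abs{b}$ (this is \cite[Proposition 2.2]{rossi}). Since $\D$ also has exactly $\abs{b}$ elements, it suffices to prove that the natural map sending $m \in \Z$ to the coset of $(m,0)^\top$ is surjective; a counting argument will then finish everything. In particular, I would deliberately avoid trying to describe the kernel of this map by hand, since that turns out to be the genuinely delicate part.

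More precisely, I would introduce the additive group homomorphism $\phi \colon \Z \to \Z^2[A]/A\Z^2[A]$ defined by $\phi(m) = (m,0)^\top + A\Z^2[A]$, which is well defined because $(m,0)^\top = m(1,0)^\top \in \Z^2 \subseteq \Z^2[A]$. The first and main step is to show that $\phi$ is surjective. Given $x \in \Z^2[A]$, I write it in the defining form $x = \sum_{j=0}^{\ell} A^j x_j$ with each $x_j \in \Z^2$. Every term with $j \ge 1$ already lies in $A\Z^2[A]$. For the term $x_0$, I exploit the key feature of the companion matrix, namely $A(1,0)^\top = (0,1)^\top$: writing $x_0 = (u,v)^\top$ with $u,v \in \Z$ gives $x_0 = u(1,0)^\top + A\pren{v(1,0)^\top}$, so that $x \equiv (u,0)^\top \pmod{A\Z^2[A]}$. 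Hence $\phi(u)$ is the coset of $x$, which proves surjectivity.

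With surjectivity in hand, the remainder is pure counting. Since $\phi$ is a surjective homomorphism onto a group of order $\abs{b}$, its kernel is the unique subgroup of index $\abs{b}$ in $\Z$, namely $\abs{b}\Z$. Consequently the integers $0,1,\dots,\abs{b}-1$ lie in pairwise distinct cosets of $A\Z^2[A]$ and, being $\abs{b}$ in number, represent all of them. This is exactly the assertion that $\D$ is a complete residue system of $\Z^2[A]/A\Z^2[A]$.

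I expect the only real content to be the surjectivity step, where the single crucial observation is $(0,1)^\top = A(1,0)^\top$, letting the second coordinate of the lowest-order integer vector be absorbed into $A\Z^2[A]$; everything else is bookkeeping. The main obstacle is really a temptation to be resisted: one could instead identify $\Z^2[A]$ with the ring $\Z[\theta]$, where $\theta$ is a root of $p$ and $A$ acts as multiplication by $\theta$, and then compute $\Z \cap \theta\Z[\theta]$ directly. That route works in principle but is more delicate, because a naive application of Gauss's lemma only yields $b \mid cm$ rather than $b \mid m$; the order count sidesteps this entirely by pinning down the kernel for free.
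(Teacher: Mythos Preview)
Your argument is correct and essentially the same as the paper's: both hinge on the identity $(0,1)^\top = A(1,0)^\top$ to show that every coset of $A\Z^2[A]$ is represented by some $(m,0)^\top$, and both finish by invoking $\abs{\Z^2[A]/A\Z^2[A]} = \abs{b}$ from \cite{rossi}. The only cosmetic difference is that the paper explicitly uses $q(A)\e_1 = 0$ to reduce $m$ modulo $b$ before counting, whereas you let the surjectivity-plus-order argument identify the kernel as $\abs{b}\Z$ automatically; this is a matter of packaging, not of substance.
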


\begin{proof}
Let $v = \sum\limits_{j=0}^m A^j v_j \in \Z^2[A]$ (where each $v_j$ is in $\Z^2$). Since each $v_j$ is an integral linear combination of the canonical basis vectors $\e_1$ and $\e_2$ of $\Q^2$ and $\e_2 = A\e_1$, it follows that
\[v = \sum_{j=0}^\ell A^j (b_j,0)^\top\]
for some $b_j \in \Z$. Because $q(A) = cA^2 + aA + bI = 0_{2\times 2}$, we may take $b_0 \in \brc{0,1,\dots,\abs{b}-1}$ by adding or subtracting $q(A) \,\textbf{e}_1 = 0_{2\times 1}$ to $v$. Thus \[v + A\Z^2[A] = (b_0,0)^\top + A\Z^2[A] \in \D + A\Z^2[A],\] and so $\D$ contains a complete residue system modulo $A\Z^2[A]$. Since $\abs{\D} = \abs{b} = \abs{\Z^2[A]/A\Z^2[A]}$, it follows that $\D$ is a complete residue system of $\Z^2[A]/A\Z^2[A]$.
\end{proof}

The proof of Proposition \ref{prop:crs} also applies to companion matrices of arbitrary size $n$. It similarly holds for when $A$ is a $2 \times 2$ matrix of the form
\[A =
\begin{bmatrix}
	r & s \\
	\varepsilon & t
\end{bmatrix}
\]
where $r \in \Z$, $\varepsilon\in \{1,-1\}$, and $s,t \in \Q$. This is because the vectors $\e_1$ and $(r,\varepsilon)^\top = A\e_1$ are the columns of a $2 \times 2$ integer matrix with determinant $\pm 1$, and thus also constitute an integral basis for the square lattice $\Z^2$.

We now turn our attention to the digit system $(A,\D)$, where $A = A(\alpha,\beta)$ is the companion matrix of $p(x) = x^2 + \alpha x + \beta$ and $\D$ is the digit set defined in Proposition \ref{prop:crs}. Our goal is to determine a transducer automaton that performs the addition of $\pm(1,0)^\top$ and $\pm(0,1)^\top$ in the digit system $(A,\D)$ and to characterize the attractor $\A_\Phi$ of $\Phi$. 

To state our main result, we need to define a \textit{finite letter-to-letter transducer} that realizes the addition of a vector $x \in \Z^2[A]$ by the canonical basis vectors $\e_1$ and $\e_2$ of $\Q^2$. We recall below the definition of a finite letter-to-letter transducer. 

\begin{definition}
A \textit{(finite letter-to-letter) transducer} or \textit{automaton} is an ordered 6-tuple $\mathcal{T} = (\mbf{Q},\mathcal{A},\mathcal{B},\mbf{E},\mbf{I},\mbf{T})$, where:
\begin{itemize}
	\item $\mbf{Q}$ is a finite nonempty set whose elements are called \textit{states};
	\item $\mathcal{A}$ and $\mathcal{B}$ are finite sets of \textit{digits}, called the \textit{input} and \textit{output alphabets}, respectively;
	\item $\mbf{E} \subseteq \mbf{Q} \times \mathcal{A} \times \mathcal{B} \times \mbf{Q}$ is called the \textit{transition relation}; and
	\item $\mbf{I}$ and $\mbf{T}$ are nonempty subsets of $\mbf{Q}$, called the sets of \textit{initial} and \textit{terminal} states, respectively.
\end{itemize}
\end{definition}
A transducer $\mathcal{T}$ can be represented pictorially by means of a directed, edge-labeled graph known as a \textit{transition diagram}. Here, $\mbf{Q}$ is the set of vertices and a directed edge is drawn from vertex $q_1$ to vertex $q_2$ with label $a \mid b$ if and only if $(q_1,a,b,q_2) \in \mbf{E}$. Such an edge is denoted by $q_1 \xrightarrow{a \mid b} q_2$. A sequence of the form
\[
\mathcal{P} : q_0 \xrightarrow{a_0 \mid b_0} q_1 \xrightarrow{a_1 \mid b_1} q_2 \xrightarrow{a_2 \mid b_2} \cdots \xrightarrow{a_{n-1} \mid b_{n-1}} q_n \xrightarrow{a_n \mid b_n} \cdots
\]
where $q_i \in \mbf{Q}$, $(a_i,b_i) \in \mathcal{A} \times \mathcal{B}$, and $(q_i, a_i, b_i, q_{i+1}) \in \mbf{E}$ for all $i \in \N$, is called a \textit{walk} in the transducer $\mathcal{T}$. The infinite word $\mbf{a} = \cdots a_n a_{n-1} \cdots a_1 a_0 \in \mathcal{A}^\N$ is called the \textit{input} of $\mathcal{P}$, and the word $\mbf{b} = \cdots b_n b_{n-1} \cdots b_1 b_0 \in \mathcal{B}^\N$ is called its corresponding \textit{output}. The pair $\mbf{a} \mid \mbf{b}$ is called the \textit{label} of $\mathcal{P}$.

For more detailed material on automata theory and transducers, we refer the reader to \cite{allouche}, \cite{hopcroft}, and \cite{sakarovitch}.

Recall that if $\sigma$ is an infinite word over $\D$ of the form $(b_{r-1} \cdots b_0)^\infty d_{n-1} \cdots d_0$, we denote the element of $\Z^2[A]$ whose digit representation is $\sigma$ by $(\sigma)_A$. If $b_0 = \cdots = b_{r-1} = 0$ and $d_{n-1} \neq 0$, then we identify $\sigma$ with the finite word $d_{n-1} \cdots d_0$. We introduce the following operations:
\begin{align*}
    (\sigma^{\pm P})_A &= (\sigma)_A \pm (1,0)^\top, \\
    (\sigma^{\pm Q})_A &= (\sigma)_A \pm A(c,0)^\top \pm (a-c,0)^\top, \\
    (\sigma^{\pm R})_A &= (\sigma)_A \mp A(c,0)^\top \mp (a,0)^\top, \\
    (\sigma^{\pm S})_A &= (\sigma)_A \pm (c,0)^\top, \\
    (\sigma^{\pm T})_A &= (\sigma)_A \pm A(c,0)^\top \pm (a+c,0)^\top, \\
    (\sigma^{\pm U})_A &= (\sigma)_A \pm (0,1)^\top = (\sigma)_A \pm A(1,0)^\top.
\end{align*}
Using these symbols, we are now ready to define the {\em addition automaton} that will occur in Theorem~\ref{thm:attractor}. We consider the automaton with set of states $\mbf{Q} = \{\pm P, \pm Q, \allowbreak \pm R, \pm S, \pm T, \pm U\}$, with input and output alphabets $\mathcal{A} = \mathcal{B} = \mathcal{D}$, and with sets of initial and terminal states $\mbf{I}=\mbf{T}=\mbf{Q}$. To define the transition relation $\mbf{E}$ between the states, we determine what becomes of the rightmost digit of $\sigma$ whenever the above operations are applied to it. They turn out to be distinct, depending on the values of $\alpha$ and $\beta$. Consequently, the transition relation $\mbf{E}$ varies across different cases and will be formally defined for each one.

We now state our main result.

\begin{theorem} \label{thm:attractor}
    Let $A = A(\alpha,\beta)$ be the companion matrix of the irreducible polynomial $p(x) = x^2 + \alpha x + \beta \in \Q[x]$, $c$ be the unique positive integer for which the polynomial $q(x) := cp(x)$ is a polynomial with coprime integer coefficients, and $a = c\alpha, b = c\beta$. Consider the digit system $(A,\D)$ where \[\D = \brc{(0,0)^\top, (1,0)^\top, \dots, (\abs{b}-1,0)^\top},\] and denote by $\A_\Phi$ the attractor of the dynamical system $\Phi$ induced by $(A,\D)$. Then the attractor $\A_\Phi$ and the addition automata are characterized as follows:
    \begin{enumerate}[label=\arabic*.]
        \item If $0 < \alpha \leq \beta - 1$, then $\A_\Phi = \brc{0}$ and the associated addition automaton is given in Section~\ref{Case1}.
        \item If $0 < -\alpha < \beta - 1$ and $\alpha < -1$, then $\A_\Phi = \brc{(0,0)^\top, (a+c,c)^\top, \dots, K(a+c,c)^\top}$ where $K$ is the largest positive integer such that $K(a+b+c) \leq b-1$. The associated addition automaton is given in Section~\ref{case2}.
        \item If $0 < -\alpha < \beta - 1$ and $\alpha \geq -1$, then $\A_\Phi = \brc{0}$ and the associated addition automaton is given in Section~\ref{case2}.
        \item If $0 < -\alpha < -\beta - 1$, then $\A_\Phi$ contains $\brc{(0,0)^\top, -(a,c)^\top, -(c,0)^\top}$.  The associated addition automaton is given in Section~\ref{case3}.
        \item If $0 < \alpha < -\beta - 1$, then $\A_\Phi$ contains $\brc{(0,0)^\top,-(a+c,c)^\top,\cdots,-K(a+c,c)^\top}$ where $K$ is the largest positive integer such that $K(-a-b-c) \leq -b-1$.  The associated addition automaton is given in Section~\ref{case4}.
    \end{enumerate}
\end{theorem}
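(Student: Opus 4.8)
The plan is to treat the two deliverables — the addition transducer and the attractor — as a single object, reading the attractor off the cycle structure of the transducer. First I would record the arithmetic that underlies every transition. From $q(A)\e_1 = cA^2\e_1 + aA\e_1 + b\e_1 = 0$ together with $A\e_1 = \e_2$ one gets the two identities $(b,0)^\top = -A(a,c)^\top$ and $(0,c)^\top = A(c,0)^\top$, hence $b\e_1 \in A\Z^2[A]$ and $(0,c)^\top \equiv 0 \pmod{A\Z^2[A]}$. These give the digit function explicitly, $\d((m,0)^\top) = (m \bmod \abs{b},\,0)^\top$, and show that a vector of the form $(j,\pm c)^\top$ reduces to the same digit as $(j,0)^\top$. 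Every operation $P,Q,R,S,T,U$ adds a vector that is an integer combination of $(1,0)^\top$, $(c,0)^\top$ and $A(c,0)^\top$, so each can be pushed through these reductions.

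Next I would interpret the symbols $\pm P,\dots,\pm U$ as the possible \emph{carries}. Processing the $A$-adic word $\sigma = \cdots d_1 d_0$ of $x$ from the least significant digit, I start in the state naming the desired operation ($P$ for $+\e_1$, $U$ for $+\e_2$) and, for each state $X$ and each admissible rightmost digit $d = (j,0)^\top$, I compute the value added at the current position, reduce it into $\D$ using the identities above to obtain the output digit $d'$, and apply $A^{-1}$ to the leftover to obtain the carry passed to the next position. The content of the construction is that this carry is again one of the twelve states; recording $X \xrightarrow{d \mid d'} Y$ for every $(X,d)$ then defines the transition relation $\mbf{E}$. Here the hypotheses separating the five cases enter: the signs and sizes of $\alpha$ and $\beta$ — equivalently of $a$, $b$, $c$ and of $a+c$, $a+b+c$ — decide whether each intermediate quantity such as $j\pm a$ or $j\pm(a+c)$ lands in $\{0,\dots,\abs{b}-1\}$ or overflows, and hence which state the carry falls into. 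The delicate point, which forces exactly the stated inequalities, is to show that the carry set \emph{closes up} into the finite list $\{\pm P,\pm Q,\pm R,\pm S,\pm T,\pm U\}$ and never generates ever-larger carries.

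Having written down $\mbf{E}$, I would verify correctness edge by edge: each transition $X \xrightarrow{d\mid d'} Y$ must satisfy the value-conservation identity asserting that the vector added by $X$ above $d$ equals $d'$ plus $A$ times the vector added by $Y$ above the next digit. Summing this telescoping identity along a walk shows that feeding $\sigma$ into the transducer from state $P$ (respectively $U$) outputs the $A$-adic word of $x + \e_1$ (respectively $x + \e_2$); since $\sigma$ is eventually periodic, so is the output, confirming that the transducer is well defined on $\Z^2[A]$. Finally I would extract $\A_\Phi$ from the transducer. Because $\A_\Phi$ consists exactly of the purely periodic points and the transducer preserves values, the attractor is governed by the cycles of the state graph on periodic inputs: a nontrivial cycle of carries that never returns to the ``finished'' state records a persistent modification of the periodic tail, and hence a nonzero periodic point, while a graph in which every carry is eventually discharged to the trivial state forces $\A_\Phi = \{0\}$ and the finiteness property. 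Concretely I would verify directly that each listed vector, for instance $k(a+c,c)^\top$ in Case~2, is a periodic point by exhibiting its purely periodic expansion (equivalently, by solving the self-similarity equation $p = \sum_{k} A^k b_k + A^r p$), and then use the transducer to show these exhaust the attractor in Cases~1--3 and to produce them in Cases~4--5.

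I expect the main obstacle to be precisely the closure-and-cycle analysis above: verifying, uniformly over all admissible digits, that the carries remain within the twelve states, while simultaneously identifying the non-discharging cycles. It is this bookkeeping that both pins down the case boundaries and determines the attractor, and it is where the bulk of the case-by-case computation will reside.
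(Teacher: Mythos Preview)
Your plan is correct and mirrors the paper's own argument almost exactly: the paper defines the twelve states $\pm P,\dots,\pm U$ as the possible carries, derives the transition relations case by case from the identity $q(A)=cA^2+aA+bI=0$ (with the case split governed by the signs and relative sizes of $a$, $b$, $c$, $a\pm c$), and then reads the attractor off the cycle structure of the resulting transducer by tracking what happens to the input $0^\infty$. The only difference is one of presentation---you frame the transitions via a ``value-conservation'' telescoping identity, whereas the paper simply derives each relation by direct computation---but the underlying mechanism and the case-by-case bookkeeping are the same.
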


We mention that these addition automata contain not only interesting arithmetic information of the underlying digit system $(A,\D)$, but they are also strongly related to the {\em contact graphs} studied in~\cite{Steiner-Thuswaldner:15} and, hence, also contain important information on geometric objects related to $(A,\D)$. Indeed, they characterize the boundary of ``fundamental domains'' related to $(A,\D)$. We do not pursue this here but intend to come back to this in future work.

A similar result as Theorem~\ref{thm:attractor} has been established in \cite{thuswaldner} for expanding integer matrices of the form
\[
M =
\begin{bmatrix}
    a & b \\ \varepsilon & d
\end{bmatrix},
\]
where $a,b,d \in \Z$ and $\varepsilon \in \brc{1,-1}$, with characteristic polynomial $\chi(x) = x^2 + \alpha x + \beta \in \Z[x]$. The attractor $\mathcal{M}_\Phi$ of the dynamical system generated by $\Phi(z)=M^{-1}(z-\delta)$, where $\delta$ is the unique element of $\brc{(0,0)^\top, (1,0)^\top, \dots, (\abs{\beta}-1,0)^\top}$ with $\delta\equiv {z}\pmod{M}$, was determined by constructing a finite letter-to-letter right transducer that realizes the addition by the vectors $(1,0)^\top$ and $(0,1)^\top$. The identity 
\begin{equation} \label{eq:carry}
    \chi(M) = M^2 + \alpha M + \beta I = 0_{2\times 2},   
\end{equation}
which directs any possible carries in the addition by $(1,0)^\top$, features prominently in the construction of such a transducer. It turns out that the form of the attractor $\mathcal{M}_\Phi$ depends solely on the values of $\alpha = -\tr(M)$ and $\beta = \det(M)$.

D.~Knuth introduces a similar automaton in \cite{knuth} for the numeration system with base $-3/2$ in his study of the space $\mathbbm{K} = \R \times \Q_2$, where $\Q_2$ denotes the field of $2$-adic rational numbers.

Our approach in finding the attractor $\mathcal{A}_\Phi$ will be similar to that in \cite{thuswaldner}; this time, we make use of the identity 
\[
q(A) = cA^2 + aA + bI = 0_{2\times 2}
\] in order to direct any carries when we add by $(1,0)^\top$. However, in our more general situation the structure of the occurring transducers is much more involved and a complete construction of these automata seems to be out of reach.

\section{The case $0 < \alpha \leq \beta - 1$}
\label{Case1}

First, assume that $0 < \alpha \leq \beta-1$. Then $b > 0$ and so $\abs{\Z^2[A]/A\Z^2[A]} = b$. Furthermore, $0 < a \leq b-c$ and this implies that $b > c$ and $b > a$. 
Following the notation of \cite{thuswaldner}, $(\nu,0)^\top \in \D$ is identified with $\nu$, and a horizontal bar over a term means that particular term is taken to be one digit.
Direct computation, together with the identity $q(A)=0$, yields the following relations for $\nu \in \brc{0,1,\dots,b-1}$.
\begin{align*}
    (\sigma\nu)^P &=
    \begin{cases}
        \sigma \, \ol{\nu+1}, & 0 \leq \nu < b-1 \\
        \sigma^R \, 0, & \nu = b-1
    \end{cases} \\
    (\sigma\nu)^{-P} &=
    \begin{cases}
        \sigma \, \ol{\nu-1}, & 1 \leq \nu < b \\
        \sigma^{-R} \, \ol{b-1}, & \nu = 0
    \end{cases} \\
    (\sigma\nu)^{\pm Q} &=
    \begin{cases}
        \sigma^{\pm T} \, \ol{\nu \pm a \mp c \pm b}, & 0 \leq \nu \pm a \mp c \pm b < b \\
        \sigma^{\pm S} \, \ol{\nu \pm a \mp c}, & 0 \leq \nu \pm a \mp c < b \\
        \sigma^{\mp Q} \, \ol{\nu \pm a \mp c \mp b}, & 0 \leq \nu \pm a \mp c \mp b < b
    \end{cases} \\
    (\sigma\nu)^{\pm R} &= 
    \begin{cases}
        \sigma^{\pm Q} \, \ol{\nu \mp a \pm b}, & 0 \leq \nu \mp a \pm b < b \\
        \sigma^{\mp S} \, \ol{\nu \mp a}, & 0 \leq \nu \mp a < b
    \end{cases} \\
    (\sigma\nu)^{\pm S} &=
    \begin{cases}
        \sigma \, \ol{\nu \pm c}, & 0 \leq \nu \pm c < b \\
        \sigma^{\pm R} \, \ol{\nu \pm c \mp b}, & 0 \leq \nu \pm c \mp b < b
    \end{cases} \\
    (\sigma\nu)^{\pm T} &=
    \begin{cases}
        \sigma^{\pm S} \, \ol{\nu \pm a \pm c}, & 0 \leq \nu \pm a \pm c < b \\
        \sigma^{\mp Q} \, \ol{\nu \pm a \pm c \mp b}, & 0 \leq \nu \pm a \pm c \mp b < b
    \end{cases}
\end{align*}
In addition, $(\sigma\nu)^{\pm U} = \sigma^{\pm P} \nu$ for each $\nu \in \brc{0,1,\dots,b-1}$ and this holds for any values of $\alpha$ and $\beta$. 
We show the derivation of the relation involving $\sigma^S$. If $0 \leq \nu < b-c$, then
\[
    \big((\sigma\nu)^S\big)_A = A(\sigma)_A + (\nu,0)^\top + (c,0)^\top = A(\sigma)_A + (\nu+c,0)^\top.
\]
By assumption, $c \leq \nu+c < b$, and so $(\sigma\nu)^S = \sigma \, \ol{\nu+c}$. If $b-c \leq \nu < b$, then $\nu+c \geq b$. Thus a carry occurs in the addition by $(c,0)^\top$ which is captured by $(b,0)^\top = -A^2(c,0)^\top - A(a,0)^\top$. Hence, 
\begin{align*}
    \big((\sigma\nu)^S\big)_A &= A(\sigma)_A + (\nu+c,0)^\top \\
    &= A(\sigma)_A + (\nu+c-b,0)^\top -A^2(c,0)^\top - A(a,0)^\top \\
    &= A\big((\sigma)_A - A(c,0)^\top - (a,0)^\top\big) + (\nu+c-b,0)^\top \\
    &= A(\sigma^R)_A + (\nu+c-b,0)^\top.
\end{align*}
Because $0 \leq \nu+c-b < c < b$, we have $\nu+c-b \in \D$. It follows that $(\sigma\nu)^S = \sigma^R \, \ol{\nu+c-b}$, thereby proving the relation. The other identities are proven similarly. The elements of the transition relation~$\mbf{E}$ corresponding to the relation involving $(\sigma\nu)^S$ are $S \xrightarrow{\nu \mid \nu+c} \bullet$, where the black dot denotes a terminal state, when $0 \leq \nu+c < b$, and $S \xrightarrow{\nu \mid \nu+c-b} R$ when $0 \leq \nu + c - b < b$. 

Observe that at most two of the three possibilities for $Q$ and $-Q$ hold, depending on the sign of $a-c$, which in turn depends on whether $\alpha >1$, $\alpha=1$, or $\alpha < 1$. An example of the case when $\alpha>1$ is given by the automaton in Figure~\ref{fig:automaton11}. This models the addition by $(1,0)^\top$ in the digit system $(A, \D)$ when $\alpha = \tfrac{3}{2}$ and $\beta = \tfrac{7}{2}$. In all cases, a vertex labeled $\ol{u}$, where $u \in \brc{P,Q,R,S,T,U}$, is the state corresponding to the operation $-u$. 
Note that we have omitted the vertices labeled $T$ and $\ol{T}$ in Figure~\ref{fig:automaton11}. 
These states play no role in the automaton, as no walk includes $T$ or $\ol{T}$ unless it starts at one of these states.

To get the string $\sigma^u$ for $u \in \brc{\pm P, \pm Q, \pm R, \pm S, \pm T, \pm U}$, one feeds $\sigma$ to the transducer with starting state labeled $u$. The automaton reads the digits of $\sigma$ from right to left, moving along the edges corresponding to their digits. Every edge label is of the form $j \mid k$, so that the digit $j$ is the input of the automaton, and $k$ is the output corresponding to that particular digit.

\begin{figure}[h]
	\begin{center}
		\includegraphics{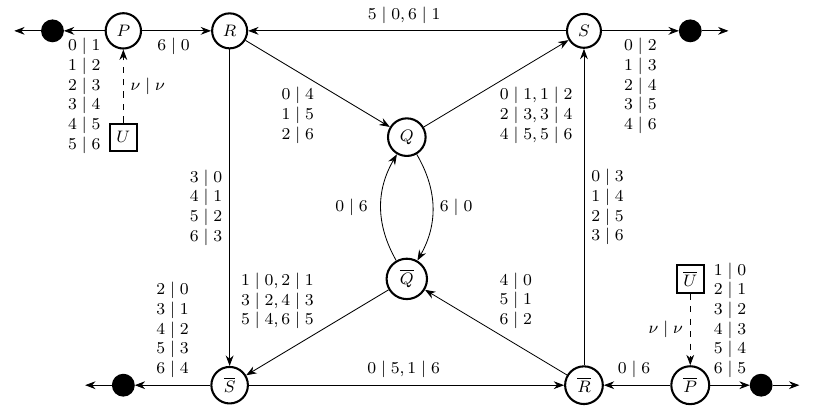}
	\end{center}
	\caption{Transducer that realizes the addition by $(1,0)^\top$ when $\alpha = \tfrac{3}{2}$ and $\beta = \tfrac{7}{2}$.}
	\label{fig:automaton11}
\end{figure}

To add or subtract by $(1,0)^\top$, one starts at the state labeled $P$ or $\ol{P}$, respectively. Analogously, $U$ and $\ol{U}$ are the respective starting points for the addition and subtraction of $(0,1)^\top$. If the automaton reaches the terminal states indicated by the black dots, then all remaining digits of the input string are copied into the output string.

Suppose $x \in \Z^2[A]$ has a digit expansion represented by the finite string $\sigma$ and is fed into the transducer starting at state $P$. If the terminal state is reached after reading the (finitely many) nonzero digits of~$\sigma$, then the remaining digits (all zero) 
are copied into the output string $\sigma^P$. Thus, the digit representation of the vector $x + (1,0)^\top$ is finite.

If we end up in one of the other states, then we consider the remaining digits of~$\sigma$ (i.e., the leading zeroes) and investigate the paths of the transducer whose inputs are all zero. Since all such walks lead to a terminal state after at most four digit readings, it follows that the output string $\sigma^P$ has only finitely many nonzero entries. 
This means that the digit representation of the vector $x + (1,0)^\top$ is still finite.
Hence, the addition by $(1,0)^\top$ produces finite strings from finite strings.

Similar arguments follow for the subtraction of $(1,0)^\top$ as well as the addition and subtraction of $(0,1)^\top$. Hence, because $(0,0)^\top \in \Z^2[A]$ has a finite digit expansion, then so do $(n,0)^\top$, $(0,m)^\top$, and $A^k(m,n)^\top$ for any $m,n \in \Z$ and $k \in \N$. It follows that $\A_\Phi = \brc{0}$.

An example of an automaton for the case where $\alpha < 1$ is provided in Figure~\ref{fig:automaton12}. It models the addition by $(1,0)^\top$ in the digit system $(A, \D)$ when $\alpha = \frac{1}{3}$ and $\beta = \frac{3}{2}$.

\begin{figure}[h]
\begin{center}
    \includegraphics{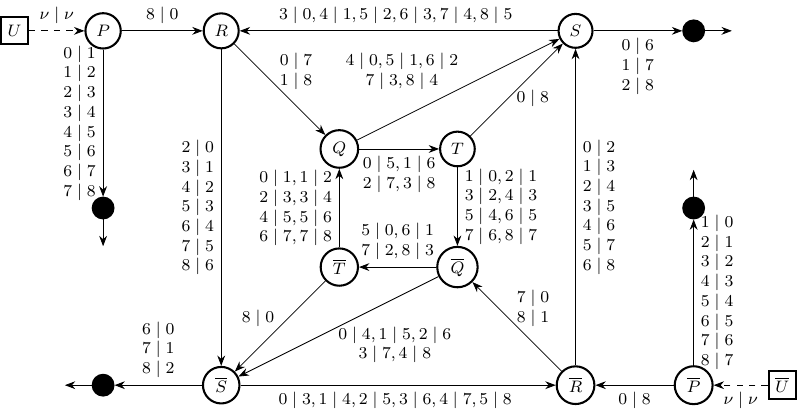}
\end{center}
\caption{Transducer that realizes the addition by $(1,0)^\top$ when $\alpha = \tfrac{1}{3}$ and $\beta = \tfrac{3}{2}$.}
\label{fig:automaton12}
\end{figure}

\section{The case $0 < -\alpha < \beta - 1$}
\label{case2}

The automata for the case $0 \leq -\alpha \leq \beta - 1$ are different compared to the previous case. Because $\alpha < 0$, some of the relations from the previous case have changed, and self-loops can occur. We have the following relations for this case.
\begin{align*}
    (\sigma\nu)^P &=
    \begin{cases}
        \sigma \, \ol{\nu+1}, & 0 \leq \nu < b-1 \\
        \sigma^R \, 0, & \nu = b-1
    \end{cases} \\
    (\sigma\nu)^{-P} &=
    \begin{cases}
        \sigma \, \ol{\nu-1}, & 1 \leq \nu < b \\
        \sigma^{-R} \, \ol{b-1}, & \nu = 0
    \end{cases} \\
    (\sigma\nu)^{\pm Q} &=
    \begin{cases}
        \sigma^{\pm T} \, \ol{\nu \pm a \mp c \pm b}, & 0 \leq \nu \pm a \mp c \pm b < b \\
        \sigma^{\pm S} \, \ol{\nu \pm a \mp c}, & 0 \leq \nu \pm a \mp c < b
    \end{cases} \\
    (\sigma\nu)^{\pm R} &= 
    \begin{cases}
        \sigma^{\mp T} \, \ol{\nu \mp a \mp b}, & 0 \leq \nu \mp a \mp b < b \\
        \sigma^{\mp S} \, \ol{\nu \mp a}, & 0 \leq \nu \mp a < b
    \end{cases} \\
    (\sigma\nu)^{\pm S} &=
    \begin{cases}
        \sigma \, \ol{\nu \pm c}, & 0 \leq \nu \pm c < b \\
        \sigma^{\pm R} \, \ol{\nu \pm c \mp b}, & 0 \leq \nu \pm c \mp b < b
    \end{cases} \\
    (\sigma\nu)^{\pm T} &=
    \begin{cases}
        \sigma^{\pm T} \, \ol{\nu \pm a \pm c \pm b}, & 0 \leq \nu \pm a \pm c \pm b < b \\
        \sigma^{\pm S} \, \ol{\nu \pm a \pm c}, & 0 \leq \nu \pm a \pm c < b \\
        \sigma^{\mp Q} \, \ol{\nu \pm a \pm c \mp b}, & 0 \leq \nu \pm a \pm c \mp b < b
    \end{cases}
\end{align*}
Just like in the previous case, at most two of the three possibilities for $T$ and $-T$ hold, depending on the sign of $a+c$, i.e., whether $\alpha > -1$, $\alpha=-1$, or $\alpha< -1$.
Figure~\ref{fig:automaton21} illustrates a case where $\alpha<-1$.
The states $Q$ and $\ol{Q}$ are not included in the figure.
Although edges emanate from these vertices, they are inaccessible in the automaton. 

Unlike the previous diagrams, the automaton in Figure~\ref{fig:automaton21} has a self-loop at the (accessible) state $T$ with $0$ as its input digit. Hence a vector $x \in \Z^2[A]$ with a finite $A$-adic representation $\sigma$ may be sent to a vector with an infinite $A$-adic representation $\sigma'$ if the walk produced by the digits of $\sigma$ ends up at $T$. It follows that $\A_\Phi$ contains at least one nonzero element.

To illustrate, the vector $x = (8,2)^\top$ corresponds to the digit string $\sigma = 0^\infty 28$. When $\sigma$ is fed into the automaton of Figure~\ref{fig:automaton21} with initial state $P$, we obtain the output string $\sigma^P = 8^\infty 5660$. Thus, $(\sigma^P)_A = (9,2)^\top = x+(1,0)^\top$.

\begin{figure}[h]
    \begin{center}
    \includegraphics{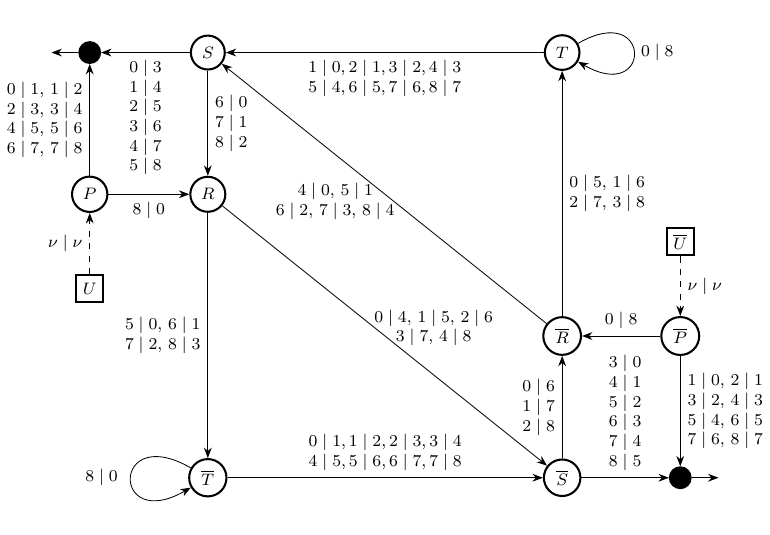}
	\end{center}
    \caption{Transducer that realizes the addition by $(1,0)^\top$ when $\alpha = -\tfrac{4}{3}$ and $\beta = 3$.}
    \label{fig:automaton21}
\end{figure}

Suppose $\alpha<-1$.
In order to obtain the elements of $\A_\Phi$, set $\gamma = a+b+c$ and consider the self-loop at $T$ labeled $0 \mid \gamma$. 

First, suppose the input string $\sigma$ to the automaton $\sigma$ is finite, i.e., $\sigma = 0^\infty d_{n-1} \cdots d_1 d_0$. 
Applying the automaton to $\sigma$ (with any initial state) yields the following two possibilities. 
\begin{itemize}
    \item The walk produced by $\sigma$ ends at a terminal state. Thus the output string is also of the form $0^\infty u$, for some finite word $u$ over $\D$. 
    \item The walk produced by $\sigma$ ends at the self loop at $T$. This time the output string is of the form $\gamma^\infty w$, for some finite word $w$ over $\D$.
\end{itemize}
Hence $(0^\infty)_A = 0$ and $(\gamma^\infty)_A$ are possible periodic points of $\Phi$. 

Using a similar argument, applying the automaton to an input string of the form $\gamma^\infty d_{n-1} \cdots d_1d_0$ yields an output string that is either finite or is periodic of the form $\gamma^\infty u$ or $(\ol{2\gamma})^\infty w$, for some finite words $u, w$ over $\D$.


In general, let $\ell \in \brc{0,1,\dots,K}$,  where $K$ is the largest positive integer such that $K\gamma \leq b-1$.
Then any input string of the form $(\ol{\ell\gamma})^\infty d_{n-1} \cdots d_1d_0$ will produce an output string having one of the following forms:
\begin{itemize}
    \item $(\ol{\ell\gamma})^\infty u$, for some finite word $u$ over $\D$, if the trail produced by $\sigma$ ends at a terminal state;
    \item $(\ol{(\ell-1)\gamma})^\infty v$, for $\ell > 0$ and some finite word $v$ over $\D$, if the trail produced by $\sigma$ ends at the self loop at~$\ol{T}$;
    \item $(\ol{(\ell+1)\gamma})^\infty w,$ for $\ell < K$ and some finite word $w$ over $\D$, if the trail produced by $\sigma$ ends at the self loop at~$T$.
\end{itemize}
Starting with $(0,0)^\top$ and repeatedly applying the automaton with initial states $\pm P$ or $\pm U$ allows us to exhaust every element of $\Z^2[A]$.
Consequently, any $x \in \Z^2[A]$ has a representation of the form \[x = \sum_{j=0}^{n-1} A^{j}d_j + A^n p_{\ell}\] where $p_\ell = \big((\ol{\ell\gamma})^\infty\big)_A$, $\ell \in \brc{0,1,\dots,K}$. The points $p_\ell$ are the only periodic points of $\Phi$.
Each $p_\ell$ is obtained by first running the automaton on $(0,0)^\top$ with initial state $T$, then repeatedly applying the same process to each output $\ell$ times.
This gives
\[p_\ell = A(\ell c,0)^\top + (\ell(a+c),0)^\top = \ell(a+c,c)^\top.\]
Hence $\A_\Phi = \brc{(0,0)^\top,(a+c,c)^\top,\dots,K(a+c,c)^\top}$.

An example of an automaton for $\alpha \geq -1$ is given in Figure~\ref{fig:automaton22}. 
Unlike the case when $\alpha<-1$, its transition diagram has no self-loops.
Similar to Figures \ref{fig:automaton11} and \ref{fig:automaton12}, it maps finite representations to finite representations. Thus by the previous arguments, $\A_\Phi = \brc{0}$.

\begin{figure}[h]
\begin{center}
	\includegraphics{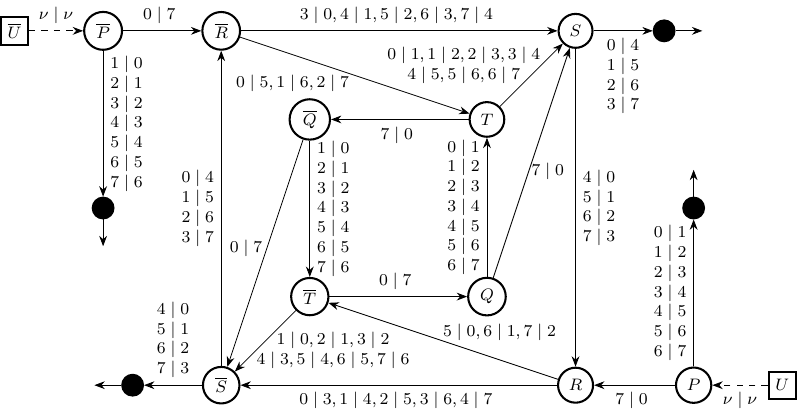}
\end{center}
\caption{Transducer that realizes the addition by $(1,0)^\top$ when $\alpha = -\tfrac{3}{4}$ and $\beta = 2$.}
\label{fig:automaton22}
\end{figure}

\section{The case $0 < -\alpha < -\beta - 1$}
\label{case3}
If $0 < -\alpha \leq -\beta - 1$, then $b < 0$ and so $\abs{\Z^2[A]/A\Z^2[A]} = -b$. In contrast to the automata in the former two cases, the states $T$ and $\ol{T}$ play no role here.
Also, the transducers have the same form regardless of the value of $\alpha$. The relations for this case are given as follows. 
\begin{align*}
    (\sigma\nu)^P &=
    \begin{cases}
        \sigma \, \ol{\nu+1}, & 0 \leq \nu < -b-1 \\
        \sigma^{-R} \, 0, & \nu = -b-1
    \end{cases} \\
    (\sigma\nu)^{-P} &=
    \begin{cases}
        \sigma \, \ol{\nu-1}, & 1 \leq \nu < -b \\
        \sigma^{R} \, \ol{-b-1}, & \nu = 0
    \end{cases} \\
    (\sigma\nu)^{\pm Q} &=
    \begin{cases}
        \sigma^{\mp Q} \, \ol{\nu \pm a \mp c \mp b}, & 0 \leq \nu \pm a \mp c \mp b < -b \\
        \sigma^{\pm S} \, \ol{\nu \pm a \mp c}, & 0 \leq \nu \pm a \mp c < -b
    \end{cases} \\
    (\sigma\nu)^{\pm R} &= 
    \begin{cases}
        \sigma^{\pm Q} \, \ol{\nu \mp a \pm b}, & 0 \leq \nu \mp a \pm b < -b \\
        \sigma^{\mp S} \, \ol{\nu \mp a}, & 0 \leq \nu \mp a < -b
    \end{cases} \\
    (\sigma\nu)^{\pm S} &=
    \begin{cases}
        \sigma \, \ol{\nu \pm c}, & 0 \leq \nu \pm c < -b \\
        \sigma^{\mp R} \, \ol{\nu \pm c \pm b}, & 0 \leq \nu \pm c \pm b < -b
    \end{cases}
\end{align*}
An example of an automaton for this case is depicted in Figure~\ref{fig:automaton3}.
\begin{figure}[h]
\begin{center}
	\includegraphics{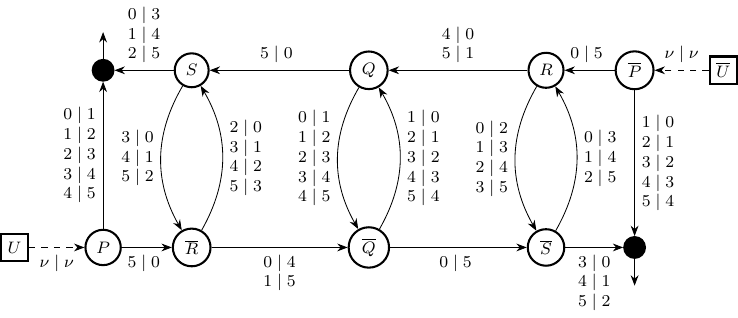}
\end{center}
\caption{Transducer that realizes the addition by $(1,0)^\top$ when $\alpha = -\tfrac{2}{3}$ and $\beta = -2$.}
\label{fig:automaton3}
\end{figure}

Suppose $x \in \Z^2[A]$ has a finite digit representation, say $\sigma = 0^\infty d_{n-1} \cdots d_1 d_0$. Then the path in the automaton defined by $\sigma$ ends at either a terminal state, or ends up at the cycle $-S \to R \to -S$.
In the former, the output word is of the form $0^\infty u$ for some finite word $u$ over $\D$; in the latter, the output word is of the form $\big(\,\ol{(-b-c)} \, \ol{(-a)}\,\big)^\infty \, w$ or $\big(\,\ol{(-a)} \, \ol{(-b-c)}\,\big)^\infty \, w$ for some finite word $w$ over $\D$. Thus
\[
(0^\infty)_A = 0, \big(\big(\ol{(-b-c)} \, \ol{(-a)}\big)^\infty\big)_A, \text{ and } \big(\ol{(-a)} \, \big(\ol{(-b-c)}\big)^\infty\big)_A
\]
are periodic points of $\Phi$. The second and third points are obtained by applying the automaton to the representation of $0$ starting at state $R$ and state $-S$, respectively. The former state subtracts $0$ by $A(c,0)^\top + (a,0)^\top = (a,c)^\top$, while the latter subtracts $0$ by $(c,0)^\top$. Hence $(0,0)^\top, -(a,c)^\top, -(c,0)^\top \in \A_\Phi$.

It should be noted that $(0,0)^\top,$ $-(a,c)^\top,$ and $-(c,0)^\top$ are not the only periodic points of $\Phi$. Other periodic points occur depending on the values of $\alpha$ and $\beta$. 
For instance, the path in the automaton defined by an infinite word of the form $\sigma = \big(\ol{(-b-c)} \, \ol{(-a)}\big)^\infty u$ (where $u$ is a finite word over $\D$) may end up at a terminal state or at the cycle $-S \to R \to -S$ as in the preceding paragraph. The former produces an output having the same form as $\sigma$, while the latter produces an output of the form $0^\infty w$, with $w$ a finite word over $\D$.

However, the path defined by $\sigma$ may also end up at the cycle $-Q \to Q \to -Q$; this occurs whenever $-a \geq a-b-c$ and $-b-c < -a+c$, i.e., whenever $-\beta - 1 \leq -2\alpha$ and $\beta - \alpha > -2$. In this case, the output word is of the form $\big(\ol{(a-2b-2c)} \, \ol{(-2a+b+c)}\big)^\infty w$ where $w$ is a finite word over $\D$. This output gives rise to another periodic point, namely $\big(\big(\ol{(a-2b-2c)} \, \ol{(-2a+b+c)}\big)^\infty\big)_A$. It is obtained by feeding the representation of $-(a,c)^\top$ into the automaton starting at state $-Q$, i.e., adding $-(a,c)^\top$ by $-A(c,0)^\top - (a-c,0)^\top$. Thus, \[-(a,c)^\top - A(c,0)^\top - (a-c,0)^\top = -(2a-c,2c)^\top\]
is a periodic point of $\Phi$ whenever $-\beta - 1 \leq -2\alpha$ and $\beta - \alpha > -2$. An example of this particular case is given in Figure \ref{fig:automaton3}, where the infinite word $\big(\ol{(-b-c)} \, \ol{(-a)}\big)^\infty u = (32)^\infty 05$ produces the output $\big(\ol{(a-2b-2c)} \, \ol{(-2a+b+c)}\big)^\infty w = (41)^\infty 40$.

\section{The case $0 < \alpha < -\beta - 1$}
\label{case4}

For this case, the states $Q$ and $-Q$ play no role in the transducer. The relations satisfied by the other states are given as follows.
\begin{align*}
    (\sigma\nu)^P &=
    \begin{cases}
        \sigma \, \ol{\nu+1}, & 0 \leq \nu < -b-1 \\
        \sigma^{-R} \, 0, & \nu = -b-1
    \end{cases} \\
    (\sigma\nu)^{-P} &=
    \begin{cases}
        \sigma \, \ol{\nu-1}, & 1 \leq \nu < -b \\
        \sigma^{R} \, \ol{-b-1}, & \nu = 0
    \end{cases} \\
    (\sigma\nu)^{\pm R} &= 
    \begin{cases}
        \sigma^{\mp T} \, \ol{\nu \mp a \mp b}, & 0 \leq \nu \mp a \mp b < -b \\
        \sigma^{\mp S} \, \ol{\nu \mp a}, & 0 \leq \nu \mp a < -b
    \end{cases} \\
    (\sigma\nu)^{\pm S} &=
    \begin{cases}
        \sigma \, \ol{\nu \pm c}, & 0 \leq \nu \pm c < -b \\
        \sigma^{\mp R} \, \ol{\nu \pm c \pm b}, & 0 \leq \nu \pm c \pm b < -b
    \end{cases} \\
    (\sigma\nu)^{\pm T} &=
    \begin{cases}
        \sigma^{\pm T} \, \ol{\nu \pm a \pm c \pm b}, & 0 \leq \nu \pm a \pm c \pm b < -b \\
        \sigma^{\pm S} \, \ol{\nu \pm a \pm c}, & 0 \leq \nu \pm a \pm c < -b
    \end{cases}
\end{align*}

An example of a transducer depicting this case is given in Figure~\ref{fig:automaton4}. 

\begin{figure}[h]
\begin{center}
	\includegraphics{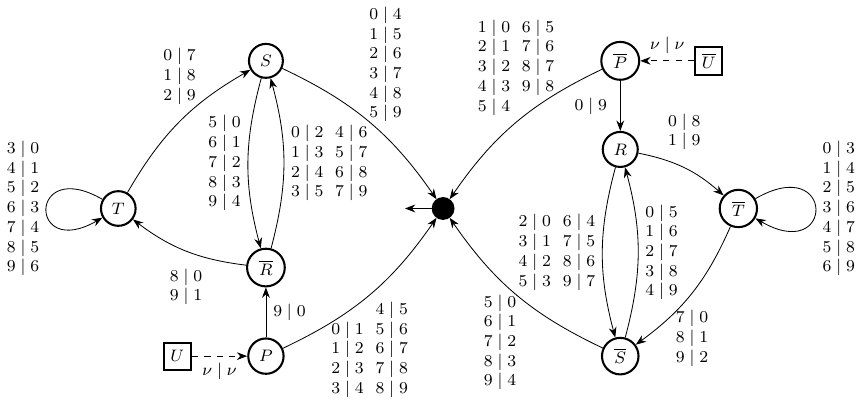}
\end{center}    
\caption{Transducer that realizes the addition by $(1,0)^\top$ when $\alpha = \tfrac{2}{5}$ and $\beta = -2$.}
\label{fig:automaton4}
\end{figure}

Analogous to the case in Figure~\ref{fig:automaton21} where $0 < -\alpha < \beta - 1$ and $\alpha<-1$, the transducer has a self-loop at $-T$ whose input is $0$ and whose output is $\gamma := -a-b-c$. As such, the argument in finding the periodic points is also similar to the aforementioned case: if $K$ is the largest positive integer such that $K\gamma \leq -b-1$ and $\gamma \geq c$, then the only possible periodic points of $\Phi$ are $\big(\ol{(\ell\gamma)}^\infty\big)_A$ where $\ell \in \brc{0,1,\dots,K}$. These points are obtained by feeding the representation of $0$ to the automaton with starting state $-T$ and iterating $\ell$ times. Computing for these points gives $(0,0)^\top,-(a+c,c)^\top,\dots,-K(a+c,c)^\top \in \A_\Phi$. 

Similar to the previous case, $\A_\Phi$ may contain points other than those obtained in the preceding discussion. For instance, suppose $K > 2$. Then the path in the automaton corresponding to an infinite word of the form $\sigma = \ol{(2\gamma)}^\infty u$ where $u$ is a finite word over $\D$ may end up at the loop $S \to -R \to S$ (particularly when $-b-c \leq 2\gamma < -a-b$). This produces an output word of the form $\big(\ol{(\gamma-a)} \, \ol{(\gamma-b-c)}\big)^\infty w$ where $w$ is a finite word over $\D$. An example illustrating this situation is given in Figure \ref{fig:automaton4}, where the infinite word $(2\gamma)^\infty u = 6^\infty 9$ produces the output word $\big(\ol{(\gamma-a)} \, \ol{(\gamma-b-c)}\big)^\infty w = (18)^\infty 0$.

\section{Relationship between matrix and polynomial digit systems}
\label{MPDS}

Our construction of the automata in the previous sections showed that the characteristic polynomial of $A = A(\alpha,\beta)$ mattered more than the actual matrix $A$ itself. Indeed, we used solely the matrix equation \[q(A) = cA^2 + aA + bI = 0_{2\times 2}\] when dealing with carries in the addition by $(1,0)^\top$. Hence, such computations hold for any matrix $B$ whose Frobenius normal form is $A$ (as $B$ will then have the same characteristic polynomial as $A$) and for which the digit set $\D = \{(0,0)^\top, (1,0)^\top, \dots, (\abs{b}-1,0)^\top\}$ is also a complete set of coset representatives of $\Z^2[B]/B\Z^2[B]$. This allows us to work only with the polynomial $q(x)$ instead of the matrix $A$. 

We now turn to the question of relating the matrix digit system $(A,\D)$ with the so-called digit system induced by the ring $\Z[x]/q\Z[x]$, as introduced by Scheicher et al.~in \cite{scheier}.

We first recall the necessary concepts in \cite{scheier}. While the results in \cite{scheier} hold for any commutative ring $R$ with unity, we focus our attention on the case where $R = \Z$. Let $d \in \Z^+$ and \[P = p_dx^d + \cdots + p_1x + p_0 \in \Z[x]\] such that $p_d$ and $p_0$ are nonzero. Let $X = x + P\Z[x]$ be the image of $x$ under the canonical epimorphism from $\Z[x]$ to $\Z[x]/P\Z[x] =: \sR$ and $\sN$ be a complete residue system of $\sR/(X)$. Define the map $D_\sN : \sR \to \sN$ where any $r \in \sR$ is mapped to the unique element $D_\sN(r)$ of $\sN$ satisfying
\[
r \equiv D_\sN(r) \pmod X,
\]
i.e., $r - D_\sN(r) \in (X)$. Let $T : \sR \to \sR$ be the associated dynamical system defined by \[T(r) = \dfrac{r - \D_\sN(r)}{X}.\]
The triple $(\sR,X,\sN)$ is called the {\em digit system over $R$} defined by $P$ and $\sN$.

In all that follows, we assume that the polynomial $P$ is primitive and identify any $n \in \Z$ with its image under the canonical epimorphism from $\Z$ to $\sR$. Observe that any $r \in \sR$ can be written in the form
\begin{equation} \label{eq:coset}
    r = \sum_{i=0}^k r_i X^i
\end{equation}
where $r_0,\ldots,r_k \in \Z$ and $X = x + P\Z[x]$. Furthermore, $P(X) = 0$ in $\Z[x]/P\Z[x]$. Hence, we may take $r_0 \in \brc{0,1,\dots,\abs{P(0)}-1}$ by adding or subtracting $P(X) = 0$ to $r$ in (\ref{eq:coset}). It follows that $\sN = \brc{0,1,\dots,\abs{P(0)}-1}$ contains a complete residue system of $\sR/(X)$ (and we will also see later that $\sN$ is in fact a complete residue system).

We begin by stating \cite[Lemma 4.1]{scheier} in our current setting. It is a classical result that allows us to represent $r$ in a ``canonical" form similar to (\ref{eq:coset}) that is unique under certain restrictions.

\begin{lemma} \label{lem:cosetrep}
Let $P = p_dx^d + \cdots + p_1x + p_0 \in \Z[x]$ be a primitive polynomial of positive degree $d$. For each $r \in \Z[x]/P\Z[x]$, there exist unique $r_0,\dots,r_k \in \Z$, with $k \in \N$ minimal and $r_d,\dots,r_k \in \brc{0,1,\dots,\abs{p_d}-1}$ whenever $k \geq d$, such that
\[
r = \sum_{i=0}^k r_iX^i
\]
where $X = x + P\Z[x]$.
\end{lemma}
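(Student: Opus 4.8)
The plan is to prove the lemma by induction on $k$, the minimal number of digits needed, while simultaneously reducing the higher-degree coefficients modulo $p_d$ using the relation $P(X)=0$. First I would establish \emph{existence} of a representation with the required constraints. Starting from an arbitrary expression $r = \sum_{i=0}^k r_i X^i$ with $r_0,\dots,r_k \in \Z$ (which exists by the remark preceding the statement), I would show that whenever some $r_i$ with $i \geq d$ lies outside $\brc{0,\dots,\abs{p_d}-1}$, we can use $P(X)=0$, i.e.
\[
p_d X^d = -\pren{p_{d-1}X^{d-1} + \cdots + p_1 X + p_0},
\]
to replace a high-degree term. Concretely, from $X^i \cdot P(X)=0$ we get $p_d X^{i} \equiv -(p_{d-1}X^{i-1}+\cdots+p_0 X^{i-d})$, so we may perform a division-with-remainder step on the leading coefficient $r_k$: write $r_k = q p_d + s$ with $s \in \brc{0,\dots,\abs{p_d}-1}$, and subtract $q X^{k-d} P(X)=0$ from $r$. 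This replaces the coefficient of $X^k$ by $s$ while adjusting the lower coefficients $r_{k-1},\dots,r_{k-d}$, possibly lowering the top degree if $s=0$. I would argue this process terminates and, by downward iteration on the index $i$ from the top, forces every coefficient with index $\geq d$ into the prescribed range $\brc{0,\dots,\abs{p_d}-1}$.

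Next I would address \emph{minimality} of $k$: among all representations with $r_d,\dots,r_k$ in the prescribed range, $k$ is taken to be as small as possible, and I would note that this is well-defined because the set of attainable top-degrees is a nonempty subset of $\N$.

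The main work, and the part I expect to be the genuine obstacle, is \emph{uniqueness}. Suppose
\[
\sum_{i=0}^{k} r_i X^i = \sum_{i=0}^{k'} r_i' X^i
\]
with both sides satisfying the constraints and $k,k'$ minimal; I must show $k=k'$ and $r_i = r_i'$ for all $i$. The key algebraic fact is that, because $P$ is primitive of degree $d$ and $p_d \neq 0$, the element $X \in \sR = \Z[x]/P\Z[x]$ is not a zero divisor, and the quotient map behaves predictably: the difference $e(X) := \sum (r_i - r_i') X^i$ equals zero in $\sR$, meaning the polynomial $\sum (r_i - r_i') x^i$ is divisible by $P(x)$ in $\Z[x]$. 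I would write $\sum (r_i-r_i') x^i = g(x) P(x)$ for some $g \in \Q[x]$, and then invoke primitivity of $P$ together with Gauss's lemma to conclude $g \in \Z[x]$. The divisibility by $P$ then constrains the coefficients: if the difference is nonzero, its degree is at least $d$, and its leading coefficient is a multiple of $p_d$; but the constraints force the top coefficients of both representations to lie in $\brc{0,\dots,\abs{p_d}-1}$, so their difference lies strictly between $-\abs{p_d}$ and $\abs{p_d}$ and can only be a multiple of $p_d$ if it is zero. This contradicts nonvanishing of the leading term, so by descending induction all coefficients agree, giving uniqueness.

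I anticipate the delicate bookkeeping to be ensuring that the range constraint $r_i \in \brc{0,\dots,\abs{p_d}-1}$ is genuinely available for \emph{all} indices $i \geq d$ simultaneously after the reduction, rather than just the leading one; the reduction of $r_k$ can disturb $r_{k-1},\dots,r_{k-d}$, so I would organize the argument as a clean induction ``from the top down'' or cite \cite[Lemma 4.1]{scheier} directly, since the statement explicitly says this is the classical result of Scheicher et al.\ specialized to $R=\Z$. The cleanest route, given that the lemma is quoted as \cite[Lemma 4.1]{scheier}, is to present the existence argument via the explicit reduction using $P(X)=0$ and then defer the uniqueness bookkeeping to that reference, remarking only on the Gauss's-lemma divisibility step that makes the leading-coefficient comparison rigorous.
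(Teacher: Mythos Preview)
The paper does not actually prove this lemma: it is introduced with the sentence ``We begin by stating \cite[Lemma 4.1]{scheier} in our current setting,'' and no argument is given. Your closing paragraph already anticipates this correctly. Your sketch therefore goes well beyond what the paper does, and the two ingredients you supply --- top-down reduction of the coefficients $r_k, r_{k-1},\dots,r_d$ using the relation $X^{i-d}P(X)=0$ for existence, and Gauss's lemma plus the leading-coefficient bound $\abs{r_k - r_k'} < \abs{p_d}$ for uniqueness --- are exactly the standard proof and are sound. One small remark on the existence step you flagged as ``delicate'': the top-down order really does work cleanly, because subtracting $qX^{i-d}P(X)$ alters only the coefficients at indices $i, i-1,\dots,i-d$ and never any index above $i$, so once $r_k$ is placed in $\brc{0,\dots,\abs{p_d}-1}$ it stays there while you process $r_{k-1}, r_{k-2},\dots$; no circular dependence arises.
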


In a similar vein, Rossi et al.~established an analogue of Lemma \ref{lem:cosetrep} 
for matrix digit systems in the proof of \cite[Proposition 2.2]{rossi}. We state the result formally below, assuming once more that $P \in \Z[x]$ is primitive of positive degree $d$.

\begin{lemma} \label{lem:companionrep}
Let $C \in \Q^{d \times d}$ be the companion matrix of some $q \in \Q[x]$ such that $P = cq$ for some $c \in \Z$. Then every $v \in \Z^d[C]$ can be written uniquely in the form
\[v = \sum_{i=0}^\ell b_i C^i \e_1\]
where $\e_1=(1,0,\ldots,0)^\top\in\Q^d$, $\ell \in \N$ is minimal, $b_0,\dots,b_\ell \in \Z$ and $b_d, \dots, b_\ell \in \brc{0,1,\dots,\abs{p_d} - 1}$ if $d \leq \ell$. 
\end{lemma}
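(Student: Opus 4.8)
The plan is to deduce Lemma~\ref{lem:companionrep} from Lemma~\ref{lem:cosetrep} by transporting the latter across a natural module isomorphism $\Z[x]/P\Z[x] \cong \Z^d[C]$. The bridge is the evaluation map $\varphi \colon \Z[x] \to \Z^d[C]$, $\varphi(f) = f(C)\e_1$. The starting point is the elementary fact that for the companion matrix $C$ of the monic polynomial $q$ one has $C^i\e_1 = \e_{i+1}$ for $0 \le i \le d-1$; hence $\e_1$ is a cyclic vector and $\brc{\e_1, C\e_1, \dots, C^{d-1}\e_1}$ is precisely the standard basis of $\Q^d$. From this I would first record that $\varphi$ is well defined with image exactly $\Z^d[C]$: since $\Z^d = \Z\e_1 + \Z C\e_1 + \cdots + \Z C^{d-1}\e_1$ and $\Z^d[C] = \sum_{k \ge 0} C^k \Z^d$, every element of $\Z^d[C]$ is a finite integer combination $\sum_i b_i C^i \e_1 = \varphi(\sum_i b_i x^i)$. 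Moreover $\varphi$ intertwines multiplication by $x$ with the action of $C$, so it is a homomorphism of $\Z[x]$-modules.

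The crux is the identification $\ker\varphi = P\Z[x]$, and this is the step I expect to be the main obstacle. The inclusion $P\Z[x] \subseteq \ker\varphi$ is immediate from $P(C) = c\,q(C) = 0_{d\times d}$. For the reverse inclusion I would argue in two stages. First, because $\e_1$ is cyclic and the minimal (hence characteristic) polynomial of $C$ is $q$, no nonzero polynomial of degree below $d$ annihilates $\e_1$, so $\brc{f \in \Q[x] : f(C)\e_1 = 0}$ is the ideal $(q)$; thus for $f \in \Z[x]$, $\varphi(f) = 0$ forces $q \mid f$, and hence $P \mid f$, in $\Q[x]$ (as $P = cq$ with $c$ a nonzero constant). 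Second, since $P$ is primitive, Gauss's lemma upgrades divisibility in $\Q[x]$ to divisibility in $\Z[x]$: a short content computation shows that $f = Pg$ with $g \in \Q[x]$ and $f \in \Z[x]$ forces $g \in \Z[x]$, so $f \in P\Z[x]$. Combining the two inclusions yields $\ker\varphi = P\Z[x]$, so $\varphi$ descends to a $\Z$-module isomorphism $\bar\varphi \colon \Z[x]/P\Z[x] \xrightarrow{\sim} \Z^d[C]$ with $\bar\varphi(X^i) = C^i\e_1$.

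With the isomorphism in hand the conclusion is immediate. Given $v \in \Z^d[C]$, set $r = \bar\varphi^{-1}(v)$ and apply Lemma~\ref{lem:cosetrep} to obtain its unique representation $r = \sum_{i=0}^\ell r_i X^i$ with $\ell$ minimal, $r_i \in \Z$, and $r_d, \dots, r_\ell \in \brc{0,1,\dots,\abs{p_d}-1}$ when $\ell \ge d$. Applying $\bar\varphi$ and writing $b_i = r_i$ gives $v = \sum_{i=0}^\ell b_i C^i\e_1$ with exactly the asserted constraints. Uniqueness and the minimality of $\ell$ transfer verbatim: any competing representation $v = \sum_i b_i' C^i\e_1$ pulls back under the bijection $\bar\varphi^{-1}$ to a representation $r = \sum_i b_i' X^i$ satisfying the same hypotheses, which Lemma~\ref{lem:cosetrep} forces to coincide with the first, so $\ell' = \ell$ and $b_i' = b_i$ throughout. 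As an alternative one could bypass the isomorphism and mimic the proof of Lemma~\ref{lem:cosetrep} directly, reducing the top coefficient of $\sum_i b_i C^i\e_1$ modulo $p_d$ by means of the relation $p_d C^d\e_1 = -\sum_{j=0}^{d-1} p_j C^j\e_1$ and inducting downward; the isomorphism route is preferable, however, since it lets us quote Lemma~\ref{lem:cosetrep} wholesale and simultaneously exhibits the structural correspondence exploited in Section~\ref{MPDS}.
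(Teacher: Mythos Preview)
Your argument is correct. The paper itself does not prove Lemma~\ref{lem:companionrep}; it is stated without proof and attributed to Rossi et al.\ \cite[Proposition~2.2]{rossi}. Immediately afterwards the paper \emph{uses} the lemma to define the map $h\colon \Z^d[C]\to \Z[x]/P\Z[x]$, $\sum_i b_i C^i\e_1 \mapsto \sum_i b_i X^i$, and observes that Lemma~\ref{lem:companionrep} makes $h$ well defined while Lemma~\ref{lem:cosetrep} makes it bijective. Your proof runs this logic in the opposite direction: you first build the inverse isomorphism $\bar\varphi = h^{-1}$ directly from the evaluation map $f\mapsto f(C)\e_1$, identify its kernel as $P\Z[x]$ via cyclicity of $\e_1$ and Gauss's lemma, and then \emph{deduce} Lemma~\ref{lem:companionrep} by transporting Lemma~\ref{lem:cosetrep} across $\bar\varphi$. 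This is a clean and self-contained route that makes the dependence on Lemma~\ref{lem:cosetrep} explicit, whereas the paper treats the two lemmas as parallel citations and only afterwards exhibits the bridge $h$ between them. Either organization yields the same isomorphism, so nothing is lost; your version has the modest advantage of not needing an independent source for Lemma~\ref{lem:companionrep}.
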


Lemma \ref{lem:companionrep} also lets us represent any vector $v \in \Z^d[C]$ in a unique ``canonical" form with conditions similar to those of Lemma \ref{lem:cosetrep}. Using Lemmas \ref{lem:cosetrep} and 
\ref{lem:companionrep}, Rossi et al.~defined the mapping $h : \Z^d[C] \to \Z[x]/P\Z[x]$ where
\[
h\pren{\sum_{i=0}^\ell b_i C^i \e_1} = \sum_{i=0}^\ell b_i X^i.
\]
Lemma \ref{lem:companionrep} guarantees that $h$ is well defined, and Lemma \ref{lem:cosetrep} shows that $h$ is bijective. In fact, it can be shown that $h$ is a $\Z$-module isomorphism, and so $\Z^d[C] \cong \Z[x]/P\Z[x]$ as $\Z$-modules. Furthermore, the restriction of $h$ to $C\Z^d[C]$ is an isomorphism from $C\Z^d[C]$ onto $(X)$, and hence $\Z^d[C] / C\Z^d[C] \cong \sR/(X)$. Because $\abs{\Z^d[C]/C\Z^d[C]}=\abs{\sR/(X)}=\abs{P(0)}$, it follows that the previously defined subset $\sN$ of $\sR$ is indeed a complete residue system of $\sR/(X)$.

Let us consider once more the companion matrix $A = A(\alpha,\beta)$ of $p = x^2 + \alpha x + \beta$ and $c \in \Z^+$ such that $q = cp = cx^2 + ax + b$ is primitive with integer coefficients. By the preceding discussion, the $\Z$-modules $\Z^2[A]$ and $\Z[x]/q\Z[x]$ are isomorphic via the mapping $h : \Z^2[A] \to \Z[x]/q\Z[x]$. We use $h$ to establish a relationship between the digit systems $(A,\D)$ and $(\sR, X, \sN)$ where $\sR = \Z[x]/q\Z[x]$,
\[
    \D = \brc{(0,0)^\top, (1,0)^\top, \dots, (\abs{b}-1,0)^\top}, \text{ and }
    \sN = \brc{0, 1, \dots, \abs{b}-1}.
\]

We consider the digit functions $\d : \Z^2[A] \to \D$ and $D_\sN : \sR \to \sN$, together with their associated dynamical systems $\Phi : \Z^2[A] \to \Z^2[A]$ and $T : \sR \to \sR$ as defined previously. Let $v = \sum\limits_{i=0}^\ell b_iA^i \e_1 \in \Z^2[A]$ where $\ell \in \N$ is minimal, $b_0,\dots,b_\ell \in \Z$ and $b_2,\dots,b_\ell \in \brc{0,1,\dots,c-1}$ whenever $\ell \geq 2$. Then $h(v) = \sum\limits_{i=0}^\ell b_i X^i$. Furthermore,  $\d(v) = (b_0',0)^\top$ and $D_\sN(h(v)) = b_0'$ where $b_0' \in \sN$ such that $b_0' \equiv b_0 \pmod{b}$. Suppose $k \in \Z$ such that $b_0 - b_0' = kb$. Then
\begin{align*}
    \Phi(v) &= A^{-1} [v - \d(v)] = A^{-1} \brk{\sum_{i=1}^\ell b_iA^i \e_1 + (b_0 - b_0') \e_1} \\
    &= A^{-1} \brk{\sum_{i=1}^\ell b_iA^i \e_1 + kb\e_1} = A^{-1} \brk{\sum_{i=1}^\ell b_iA^i \e_1 - kcA^2\e_1 - kaA\e_1} \\
    &= A^{-1} \brk{\sum_{i=3}^\ell b_iA^i \e_1 + (b_2-kc)A^2\e_1 + (b_1-ka)A\e_1} \\
    &= \sum_{i=3}^\ell b_iA^{i-1} \e_1 + (b_2-kc)A\e_1 + (b_1-ka)\e_1,
\end{align*}
where $b_3,\dots,b_\ell \in \brc{0,1,\dots,c-1}$. Thus,
\[
(h \circ \Phi)(v) = \sum_{i=3}^\ell b_i X^{i-1} + (b_2 - kc) X + (b_1 - ka).
\]
On the other hand,
\begin{align*}
    T(h(v)) &= \dfrac{h(v) - D_\sN(h(v))}{X} = \dfrac{\sum\limits_{i=1}^\ell b_i X^i + (b_0 - b_0')}{X} = \dfrac{\sum\limits_{i=1}^\ell b_i X^i + kb}{X} \\
    &= \dfrac{\sum\limits_{i=1}^\ell b_i X^i - kcX^2 - kaX}{X} = \dfrac{\sum\limits_{i=3}^\ell b_i X^i +(b_2-kc)X^2 + (b_1-ka)X}{X} \\
    &= \sum\limits_{i=3}^\ell b_i X^{i-1} +(b_2-kc)X + (b_1-ka) = (h \circ \Phi)(v).
\end{align*}
Thus, we have proven the following result.
\begin{theorem} \label{thm:comm}
Let $A \in \Q^{2\times 2}$ be the companion matrix of the irreducible polynomial $p(x) = x^2 + \alpha x + \beta$, and $c \in \Z^+$ such that $q(x) = cp(x) = cx^2 + ax + b$ is a primitive polynomial with integer coefficients. Suppose $h$, $\D$, $\sN$, $\Phi$, and $T$ are as defined previously. Then
\[T \circ h = h \circ \Phi.\]
\end{theorem}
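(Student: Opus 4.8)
The plan is to verify the commutativity $T \circ h = h \circ \Phi$ by a direct computation on a generic element, exploiting the fact that both dynamical systems are governed by the same algebraic relation, namely $q(A)\e_1 = 0_{2\times 1}$ on the matrix side and $q(X) = 0$ on the polynomial side. Since $h$ has already been established as a $\Z$-module isomorphism with $h(A^i\e_1) = X^i$, the whole argument reduces to tracking how the digit extraction and the ensuing carry propagate identically under $h$.

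First I would fix an arbitrary $v \in \Z^2[A]$ and write it in the canonical form provided by Lemma~\ref{lem:companionrep}, say $v = \sum_{i=0}^\ell b_i A^i \e_1$ with $\ell$ minimal and with $b_2,\dots,b_\ell \in \brc{0,\dots,c-1}$ when $\ell \geq 2$. Applying $h$ gives $h(v) = \sum_{i=0}^\ell b_i X^i$ at once. The next step is to confirm that the two digit functions are intertwined by $h$: I would check that $\d(v) = (b_0',0)^\top$ and $D_\sN(h(v)) = b_0'$ for the same residue $b_0' \in \sN$ with $b_0' \equiv b_0 \pmod{b}$, so that $h(\d(v)) = D_\sN(h(v))$. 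This rests on the earlier identification $\Z^2[A]/A\Z^2[A] \cong \sR/(X)$ coming from the restriction of $h$ to an isomorphism $A\Z^2[A] \to (X)$.

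With the digit correspondence in hand, I would set $b_0 - b_0' = kb$ and carry out the two computations side by side. On the matrix side, $\Phi(v) = A^{-1}(v - \d(v))$ produces the carry term $kb\,\e_1$, which I rewrite using $b\,\e_1 = -cA^2\e_1 - aA\e_1$ (a consequence of $q(A)\e_1 = 0$) before dividing by $A$; this absorbs the carry into the coefficients of $A\e_1$ and $A^2\e_1$ and lowers every power by one. On the polynomial side, $T(h(v)) = (h(v) - D_\sN(h(v)))/X$ produces the carry $kb$, rewritten via $b = -cX^2 - aX$ (a consequence of $q(X)=0$). Because these two substitutions are formally identical under the correspondence $A^i\e_1 \leftrightarrow X^i$, applying $h$ to the resulting expression for $\Phi(v)$ yields exactly the expression obtained for $T(h(v))$, giving $T(h(v)) = (h\circ\Phi)(v)$.

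The one step that requires genuine care is the digit correspondence $h\circ \d = D_\sN \circ h$, since a priori $\d$ and $D_\sN$ are defined by reductions in different modules; everything else is a bookkeeping exercise in matching the two carry rewrites. But this correspondence is precisely what the isomorphism $\Z^2[A]/A\Z^2[A]\cong \sR/(X)$ guarantees, so once it is invoked the remainder of the proof is the parallel calculation sketched above.
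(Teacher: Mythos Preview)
Your proposal is correct and follows essentially the same route as the paper: write $v$ in the canonical form of Lemma~\ref{lem:companionrep}, identify the common residue $b_0'$ on both sides, set $b_0-b_0'=kb$, and then rewrite the carry via $q(A)\e_1=0$ and $q(X)=0$ before dividing by $A$ (resp.\ $X$). The only difference is presentational: you isolate the intertwining $h\circ\d = D_\sN\circ h$ as a separate observation, whereas the paper simply records $\d(v)=(b_0',0)^\top$ and $D_\sN(h(v))=b_0'$ and proceeds directly to the parallel computations.
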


Hence, in order to obtain the mapping $\Phi$, and subsequently the digits of a given vector $v$, one simply conjugates the dynamical system $T$ by the isomorphism $h$. This implies that the digits in the matrix system $(A,\D)$ may be obtained from the system $(\sR,X,\sN)$ via conjugation. Observe that Theorem \ref{thm:comm} also holds true for a companion matrix of any size $d$ with the same proof.

\section{Shift radix systems and special cases}

The notion of shift radix systems was introduced formally in 2005 by Akiyama et al.~in \cite{srs-akiyama}, in which they established several of its properties and its connections to existing number systems. In particular, shift-radix systems are intimately related to the so-called beta-expansions introduced in 1957 by Renyi \cite{renyi}. They have also been used to characterize when a given polynomial with integer coefficients is the base of what is called a canonical number system, of which \cite{scheier} is a generalization (see \cite{cns-petho} for more details). 

Berth\'{e} et al.~in \cite{berthe} also associated certain collections of fractal tiles with shift radix systems and explored some of their basic properties. For a full survey on shift radix systems, their properties, relationships with other numeration systems, and their geometric aspects, we refer the reader to \cite{srs-kirsch}.

We now use certain regions associated with shift-radix systems to address cases that were not covered in the proof of Theorem \ref{thm:attractor}. We begin by recalling the definition of a shift radix system.

\begin{definition}
Let $d$ be a positive integer and $\r = (r_0, r_1, \dots, r_{d-1}) \in \R^d$. The \textit{shift-radix system} associated to $\r$ is the mapping $\tau_{\r} : \Z^d \to \Z^d$ defined by
\[
\tau_{\r}(\bmath{z}) = (z_1, z_2, \dots, z_{d-1}, -\flr{\bmath{rz}}),
\]
where $\bmath{z} = (z_0, z_1, \dots, z_{d-1}) \in \Z^d$, $\flr{\cdot}$ is the floor function, and $\r\bmath{z}$ is the usual inner product of $\bmath{r}$ and $\bmath{z}$ in $\R^d$. The system $\tau_{\bmath{r}}$ is said to have the \textit{finiteness property} if for each $\bmath{z} \in \Z^d$, there exists $k \in \N$ such that $\tau_{\bmath{r}}^k(\bmath{z}) = \bmath{0}$. The following sets are associated to the SRS $\tau_{\bmath{r}}$:
\begin{align*}
    \D_d &:= \brc{\bmath{r} \in \R^d \mid \text{ for all } \bmath{z} \in \Z^d, \text{ there exist } k,\ell \in \N \text{ such that } \tau_{\r}^k(\bmath{z}) = \tau_{\r}^{k+\ell}(\bmath{z})} \\
    \D_d^{(0)} &:= \brc{\bmath{r} \in \R^d \mid \tau_{\r} \text{ has the finiteness property}}.
\end{align*}
\end{definition}
Observe that $\D_d$ consists of all vectors $\bmath{r} \in \R^d$ for which the sequence $\big(\tau_k(\bmath{z})\big)_{k=1}^\infty$ is eventually periodic for all $\bmath{z} \in \Z^d$. Clearly, $\D_d^{(0)} \subseteq \D_d$. In fact, \cite[Section 4]{srs-akiyama} shows that $\D_d^{(0)}$ can be obtained by ``cutting out" polyhedra from $\D_d$. Figure \ref{fig:dapprox} shows an approximation of $\D_2^{(0)}$.

\begin{figure}[h]
    \centering
    \includegraphics[scale=0.4]{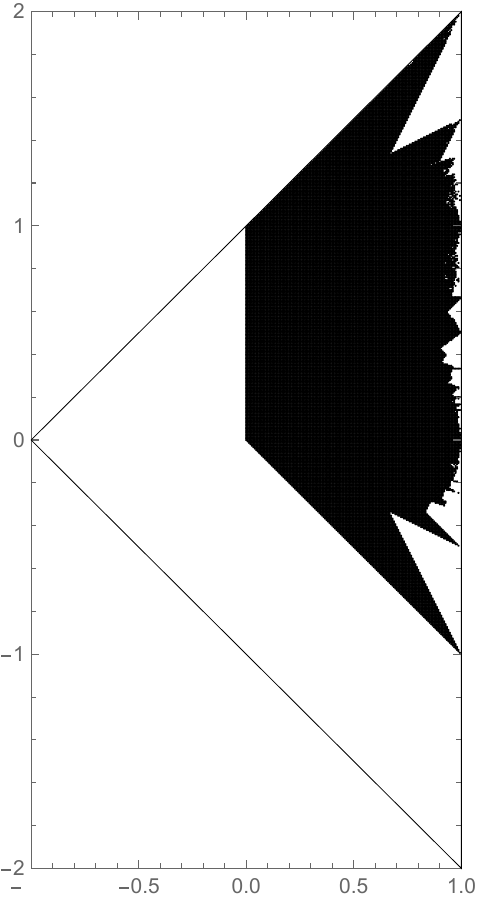}
    \caption{Graphs of $\D_2$ (white triangle) and an approximation of $\D_2^{(0)}$ (black region).}
    \label{fig:dapprox}
\end{figure}

Algebraic integers $\beta>1$ with the finiteness property are characterized in terms of $\D_{d-1}^{(0)}$ in \cite[Theorem 2.1]{srs-akiyama}, while CNS polynomials are characterized in terms of $\D_d^{(0)}$ in \cite[Theorem 3.1]{srs-akiyama}.
Hence, the structure of $\D_d^{(0)}$ is of interest. As was shown in \cite{srs-akiyama}, the case $d = 1$ turns out to be simple since $\D_1^{(0)} = [0,1)$. However, complications arise when $d = 2$. The results in \cite{srs-akiyama-ii} are mainly devoted to the characterization of $\D_2^{(0)}$, particularly which sets are contained in $\D_2^{(0)}$ (refer to Figure 1 of \cite{srs-akiyama-ii} for more details). Moreover, \cite{srs-akiyama} establishes an algorithm to determine when a given point is inside $\D_2^{(0)}$.

We have seen in Section~\ref{MPDS} that there is a relationship between the matrix digit system $(A,\D)$ and $(\sR,X,\sN)$ where $A,\D,\sR,X,\sN$ are as described in the preceding sections. As such, we turn our attention to the characterization of SRS with respect to canonical number systems. We begin by stating the aforementioned results by Akiyama et al.~\cite{srs-akiyama} and Berth\'{e} et al.~\cite{berthe} for the case where $d = 2$.

\begin{theorem} \label{thm:cns-poly}
Let $P(x) = cx^2 + ax + b \in \Z[x]$. Then $P(x)$ is a CNS polynomial if and only if $\pren{\frac{c}{b}, \frac{a}{b}} \in \D_2^{(0)}$.    
\end{theorem}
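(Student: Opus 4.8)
The statement to prove is Theorem~\ref{thm:cns-poly}: for $P(x) = cx^2 + ax + b \in \Z[x]$, the polynomial $P$ is a CNS polynomial if and only if $\pren{\frac{c}{b}, \frac{a}{b}} \in \D_2^{(0)}$. The natural approach is to unwind both sides of the claimed equivalence to the same dynamical condition on $\Z^2$. On one side, ``$P$ is a CNS polynomial'' means that the digit system $(\sR, X, \sN)$ with $\sR = \Z[x]/P\Z[x]$ and $\sN = \brc{0,1,\dots,\abs{P(0)}-1}$ has the finiteness property, i.e.\ the dynamical map $T$ sends every element of $\sR$ to $0$ after finitely many iterations. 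On the other side, membership $\r = \pren{\frac{c}{b}, \frac{a}{b}} \in \D_2^{(0)}$ means the SRS map $\tau_{\r}$ has the finiteness property on $\Z^2$. So the whole proof reduces to producing a conjugacy, or at least a finiteness-preserving correspondence, between the map $T$ on $\sR$ and the map $\tau_{\r}$ on $\Z^2$.

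First I would fix the concrete $\Z$-module structure of $\sR = \Z[x]/P\Z[x]$. By Lemma~\ref{lem:cosetrep}, every $r \in \sR$ has a canonical representation $\sum_{i=0}^k r_i X^i$ with the leading coefficients constrained to $\brc{0,\dots,\abs{c}-1}$ once the degree exceeds $d=2$; in particular the ``state'' of an element can be recorded by a finite integer vector. The key computation is to track what the digit map does: writing $r = r_0 + r_1 X + r_2 X^2 + \cdots$, the digit $D_\sN(r)$ is the representative of $r_0 \pmod{b}$ in $\sN$, and $T(r) = (r - D_\sN(r))/X$. The relation $P(X)=0$, i.e.\ $cX^2 + aX + b = 0$, is exactly the carry rule that rewrites the quotient so that the new leading coefficients land back in the correct residue class. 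I would carry out this division explicitly on a general element and read off how the coordinate pair $(r_0, r_1)$ (or the analogous two lowest free coordinates) transforms. The point is to show that this transformation, after clearing denominators against $b$, is precisely the floor-rule $\bmath{z} \mapsto (z_1, -\flr{r_0 \frac{c}{b} + z_1 \frac{a}{b}})$ defining $\tau_{\r}$ with $\r = (c/b, a/b)$. This is essentially the standard CNS-to-SRS dictionary specialized to the nonmonic quadratic case, and it is where I would lean on the fact, recorded in the excerpt, that nonmonic CNS are contained in the SRS framework (cite~\cite{berthe}).

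Once the explicit conjugation is in hand, the logical equivalence is immediate: a point $r$ reaches $0$ under iteration of $T$ if and only if the corresponding integer vector reaches $\bmath 0$ under iteration of $\tau_{\r}$, because the correspondence is a bijection intertwining the two maps and sending $0$ to $\bmath 0$. Finiteness for all starting points on one side therefore transfers verbatim to the other, which is exactly the biconditional asserted. The main obstacle, and the only genuinely delicate step, is the second one: getting the floor function to appear correctly. The division $(r - D_\sN(r))/X$ produces the carry $k = (r_0 - D_\sN(r_0))/b$, and one must verify that this integer $k$ equals $\flr{\bmath{r}\bmath{z}}$ for the appropriate choice of $\bmath z$ and $\r$, matching signs and the half-open normalization of the digit set $\sN$. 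Because $P$ is only primitive rather than monic, the coefficient $c$ enters the leading coordinate and the bookkeeping is heavier than in the classical monic CNS case; careful attention to the range conditions $r_2,\dots,r_k \in \brc{0,\dots,\abs{c}-1}$ from Lemma~\ref{lem:cosetrep} is what makes the floor identity come out cleanly. I would isolate that identity as the technical heart of the argument and treat the rest as formal transport of finiteness along a conjugacy.
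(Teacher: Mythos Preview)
The paper does not prove Theorem~\ref{thm:cns-poly} at all: it is introduced with the sentence ``We begin by stating the aforementioned results by Akiyama et al.~\cite{srs-akiyama} and Berth\'e et al.~\cite{berthe} for the case where $d=2$,'' and is then used as a black box. So there is no proof in the paper to compare against; the authors treat this as a known characterization imported from the literature.

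Your outline is the right strategy in spirit---conjugate the backward-division map $T$ on $\sR=\Z[x]/P\Z[x]$ to the SRS map $\tau_{\r}$ with $\r=(c/b,a/b)$ and transport the finiteness property along the conjugacy---and this is indeed how the cited references proceed. The one place where your sketch is thin is the nonmonic bookkeeping: when $c>1$, Lemma~\ref{lem:cosetrep} does not give a bijection $\sR\to\Z^2$ but rather a parametrization by sequences whose tail lies in $\{0,\dots,|c|-1\}$, so the ``conjugacy with $\tau_{\r}$ on $\Z^2$'' is not literally a bijection of state spaces. In the references this is handled by showing that the $T$-dynamics, after finitely many steps, is governed by a two-dimensional integer recursion that matches $\tau_{\r}$; your plan to isolate the carry $k=(r_0-D_{\sN}(r_0))/b$ and identify it with $\lfloor \r\bmath z\rfloor$ is exactly the computation needed, but you should make explicit which two integer coordinates you are tracking and why the higher coefficients (those constrained to $\{0,\dots,|c|-1\}$) do not obstruct the equivalence of finiteness.
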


As such, Theorem \ref{thm:cns-poly} implies that it suffices to look at the rational points of $\D_2^{(0)}$ in order to obtain coefficients for CNS polynomials.

We now use Figure~\ref{fig:dapprox} together with Theorem \ref{thm:cns-poly} to refine the cases we obtained in the previous sections. Like before, let $A = A(\alpha,\beta)$ be the companion matrix of the irreducible polynomial $p(x) = x^2 + \alpha x + \beta \in \Q[x]$ and $c \in \Z^+$ be the unique positive integer such that $q(x) = cp(x) =: cx^2 + ax + b$ is primitive. According to Theorem $\ref{thm:cns-poly}$, $q(x)$ is a CNS polynomial if and only if $\pren{\frac{c}{b},\frac{a}{b}} = \big(\frac{1}{\beta},\frac{\alpha}{\beta}\big) \in \D_2^{(0)}$. In view of Theorem \ref{thm:comm}, a rational point of the form $\big(\frac{1}{\beta},\frac{\alpha}{\beta}\big) \in \D_2^{(0)}$ corresponds to a matrix digit system $(A,\D)$ having zero attractor. Now, let $x = 1/\beta$ and $y = \alpha/\beta$. As a consequence of Theorem \ref{thm:attractor}, we consider the following regions in $\R^2$:
\begin{align*}
    R_1 &= \brc{(x,y) \in \R^2 \mid x > 0 \text{ and } 0 < y \leq 1-x} \\
    R_2 &= \brc{(x,y) \in \R^2 \mid x > 0 \text{ and } x < -y < 1-x} \\
    R_3 &= \brc{(x,y) \in \R^2 \mid x > 0, 0 < -y < 1-x, \text{ and } y \geq -x} \\
    R_4 &= \brc{(x,y) \in \R^2 \mid x < 0 \text{ and } -1-x < -y < 0} \\
    R_5 &= \brc{(x,y) \in \R^2 \mid x < 0 \text{ and } -1-x < y < 0}.
\end{align*}
Then for each $i \in \brc{1,2,3,4,5}$, $(x,y) \in R_i$ if and only if $\alpha$ and $\beta$ satisfy Case $i$ of Theorem \ref{thm:attractor}. 

Observe that the regions $R_1$ and $R_3$, which are regions whose rational points correspond to matrix digit systems with attractor zero, are contained in $\D_2^{(0)}$. On the other hand, the regions $R_2$, $R_4$, and $R_5$ do not intersect with $\D_2^{(0)}$, and they are regions containing rational points with matrix digit systems having a nonzero attractor. Figure \ref{fig:attrregions} illustrates this observation. As such, we can use Figure~\ref{fig:attrregions} to consider cases not covered by Theorem \ref{thm:attractor} and construct the automata corresponding to these cases. Of particular interest are subregions of $\D_2^{(0)}$ that are disjoint from regions $R_1$ and $R_3$. As we shall see later, the number of states needed to construct such automata will increase to accommodate the additional conditions imposed on $\alpha$ and $\beta$.

\begin{figure}[h]
\includegraphics[scale=0.8]{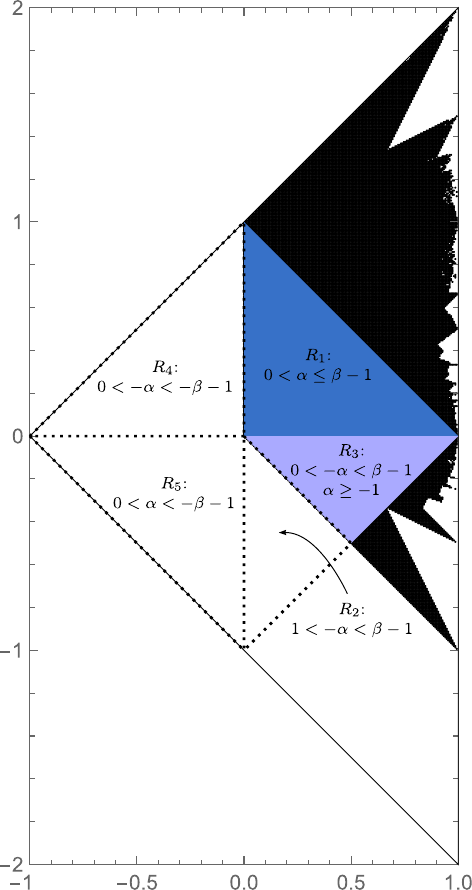}
\caption{Graphs of regions $R_i$ for $i \in \brc{1,2,3,4,5}$ and $\D_2^{(0)}$.}
\label{fig:attrregions}
\end{figure}

It can be shown that whenever $\alpha > 1$ and $\beta - 1 \leq \alpha \leq \beta$, then we obtain exactly the same automata as Case 1 of Theorem \ref{thm:attractor}. We now assume that $0 < \alpha \leq 1$ and $0 < \beta - 1 < \alpha < \beta$. Then under these assumptions, $1 < \beta \leq 2$. We will also impose that $\alpha + \beta > 2$ (which implies that $\alpha>1/2$) and $\beta > 2\alpha$. Apart from the states $\pm P,\dots,\pm T$ introduced in Section~\ref{SMR}, we now introduce new states $\pm E, \dots, \pm H$ defined as follows:
\begin{align*}
    (\sigma^{\pm E})_A &= (\sigma)_A \mp A(2c,0)^\top \mp (2a-c,0)^\top \\
    (\sigma^{\pm F})_A &= (\sigma)_A \pm (2c,0)^\top \\
    (\sigma^{\pm G})_A &= (\sigma)_A \pm A(c,0)^\top \pm (a-2c,0)^\top \\
    (\sigma^{\pm H})_A &= (\sigma)_A \mp A(2c,0)^\top \mp (2a,0)^\top.
\end{align*}
For this case, the relations involving $\sigma^{\pm P}, \sigma^{\pm Q}, \sigma^{\pm R}, \sigma^{\pm S}$ is the same as in Section~\ref{Case1}. The remaining states satisfy the following relations.
\begin{align*}
    (\sigma\nu)^{\pm T} &=
    \begin{cases}
        \sigma^{\mp Q} \, \ol{\nu \pm a \pm c \mp b}, & 0 \leq \nu \pm a \pm c \mp b < b \\
        \sigma^{\pm E} \, \ol{\nu \pm a \pm c \mp 2b}, & 0 \leq \nu \pm a \pm c \mp 2b < b
    \end{cases} \\
    (\sigma\nu)^{\pm E} &=
    \begin{cases}
        \sigma^{\pm G} \, \ol{\nu \mp 2a \pm c \pm b}, & 0 \leq \nu \mp 2a \pm c \pm b < b \\
        \sigma^{\mp F} \, \ol{\nu \mp 2a \pm c}, & 0 \leq \nu \mp 2a \pm c < b
    \end{cases} \\
    (\sigma\nu)^{\pm F} &=
    \begin{cases}
        \sigma^{\pm R} \, \ol{\nu \pm 2c \mp b}, & 0 \leq \nu \pm 2c \mp b < b \\
        \sigma^{\pm H} \, \ol{\nu \pm 2c \mp 2b}, & 0 \leq \nu \pm 2c \mp 2b < b
    \end{cases} \\
    (\sigma\nu)^{\pm G} &=
    \begin{cases}
        \sigma^{\pm T} \, \ol{\nu \pm a \mp 2c \pm b}, & 0 \leq \nu \pm a \mp 2c \pm b < b \\
        \sigma^{\pm S} \, \ol{\nu \pm a \mp 2c}, & 0 \leq \nu \pm a \mp 2c < b 
    \end{cases} \\
    (\sigma\nu)^{\pm H} &=
    \begin{cases}
        \sigma^{\pm G} \, \ol{\nu \mp 2a \pm b}, & 0 \leq \nu \mp 2a \pm b < b \\
        \sigma^{\mp F} \, \ol{\nu \mp 2a}, & 0 \leq \nu \mp 2a < b 
    \end{cases}
\end{align*}

\begin{figure}[h]
\includegraphics[scale=0.9]{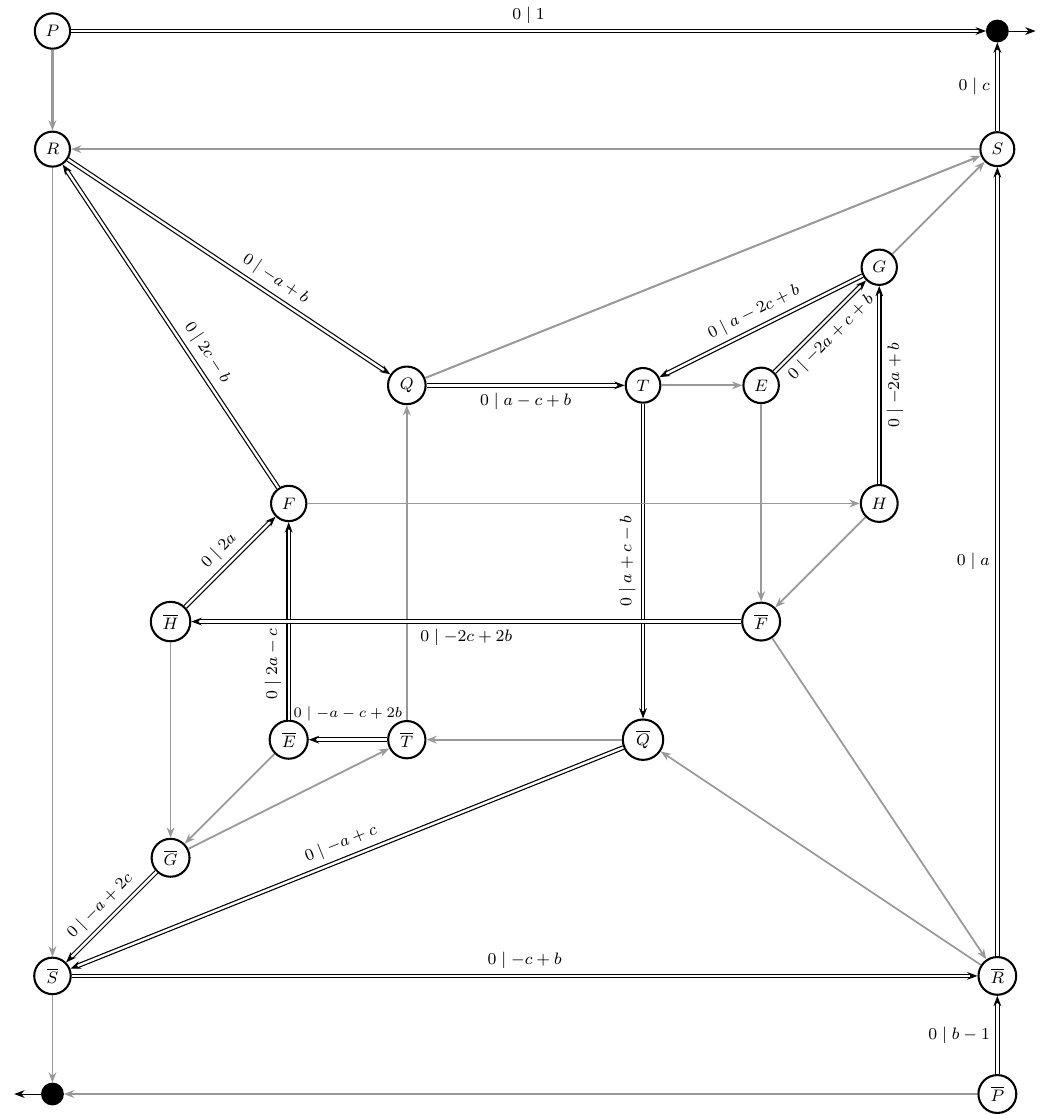}
\caption{Transducer that realizes the addition by $(1,0)^\top$ whenever $0 < \alpha \leq 1$, $0 < \beta - 1 < \alpha < \beta$, $\alpha + \beta > 2$, and $\beta > 2\alpha$.}
\label{fig:automaton51}
\end{figure}

Observe that the condition $\beta > 2\alpha$ is needed for the relations of states $\pm H$ to hold, and $\alpha + \beta > 2$ is required to satisfy those of states $\pm E$ and $\pm G$. The transducer which captures this case is given in Figure~\ref{fig:automaton51}. Here, only edge labels of the form $0 \mid k$ are shown and edges containing a label of this form are displayed as double lines. Observe that any input string of the form $\sigma = 0^\infty w$ with $w$ a finite word over $\D$ will lead to a terminal state once fed to the transducer. Hence, the corresponding output string has the same form as $\sigma$. By the arguments of Cases 1 and 3 of Theorem \ref{thm:attractor}, the dynamical system $\Phi$ has zero attractor. This leads to the following result.

\begin{theorem} \label{thm:automaton51}
Consider the digit system $(A,\D)$ where $A$ is the companion matrix of the irreducible polynomial $p(x) = x^2 + \alpha x + \beta
\in \Q[x]$ and $\D = \brc{(0,0)^\top,(1,0)^\top,\dots,(\abs{b}-1,0)^\top}$. Let $\A_\Phi$ be the attractor of the dynamical system $\Phi$ induced by $(A,\D)$, $c$ be the unique positive integer for which the polynomial $q(x):=cp(x)$ is a polynomial with coprime integer coefficients, and $a = c\alpha, b = c\beta$. Then the following hold:
\begin{enumerate}
    \item If $\alpha > 1$ and $\beta - 1 \leq \alpha \leq \beta$, then $\A_\Phi = \brc{0}$.
    \item If $0 < \alpha \leq 1$ and $0<\beta - 1 < \alpha < \beta$, then $\A_\Phi = \brc{0}$ provided that one of the following conditions hold.
    \begin{enumerate}
        \item $\alpha + \beta > 2$ and $\beta > 2\alpha$
        \item $\alpha + 2 < 2\beta$ and $\beta \leq 2\alpha$
    \end{enumerate}
\end{enumerate}
\end{theorem}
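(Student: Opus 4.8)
The plan is to prove Theorem~\ref{thm:automaton51} by the same transducer-based strategy used throughout Sections~\ref{Case1}--\ref{case4}: in each sub-case, I construct the addition automaton realizing $+(1,0)^\top$, verify that it maps finite $A$-adic strings to finite $A$-adic strings, and conclude $\A_\Phi=\brc{0}$ via the argument already established in Case~1. The key reduction is that, by Theorem~\ref{thm:comm}, the attractor depends only on the primitive polynomial $q(x)=cx^2+ax+b$ through the matrix identity $q(A)=cA^2+aA+bI=0_{2\times 2}$, so the entire computation is governed by where carries land when we add digits. Thus the heart of the proof is a careful case analysis of the transition relation, showing that no self-loop with input $0$ and nonzero output is reachable by any walk starting from $P$, $-P$, $U$, or $-U$.

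For part~(1), where $\alpha>1$ and $\beta-1\leq\alpha\leq\beta$, I would first show that these inequalities force $0<a\leq b$ and $b>c$, so that the sign conditions selecting the branches of the relations in Section~\ref{Case1} agree exactly with those of Case~1. In particular, one checks that the state $T$ (and $\ol{T}$) remains inaccessible and that every zero-input walk terminates at a black-dot state after boundedly many steps, exactly as in Figure~\ref{fig:automaton11}. The finiteness conclusion $\A_\Phi=\brc{0}$ then follows verbatim from the argument given in Section~\ref{Case1}, since $(0,0)^\top$ has a finite expansion and addition/subtraction of $\e_1,\e_2$ preserves finiteness.

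For part~(2), the new feature is the enlarged state set $\brc{\pm P,\dots,\pm T,\pm E,\dots,\pm H}$, introduced precisely because the hypotheses $0<\alpha\leq 1$ and $0<\beta-1<\alpha<\beta$ no longer confine all carries to a single application of $q(A)=0$; some carries require the ``doubled'' relations captured by $E,F,G,H$. In sub-case~(a), the conditions $\alpha+\beta>2$ and $\beta>2\alpha$ are exactly what is needed for the $\pm E,\pm G$ and $\pm H$ branches (respectively) to be well-defined, as noted in the text preceding the theorem. Here I would verify, using the explicit relations for $(\sigma\nu)^{\pm T},\dots,(\sigma\nu)^{\pm H}$, that the transition diagram of Figure~\ref{fig:automaton51} contains no self-loop with a nonzero output on input~$0$: every double-lined $0\mid k$ edge leads, after finitely many steps, into a terminal state. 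Consequently every input $0^\infty w$ produces an output $0^\infty w'$, and the Case~1/Case~3 finiteness argument again yields $\A_\Phi=\brc{0}$. Sub-case~(b), with $\alpha+2<2\beta$ and $\beta\leq 2\alpha$, is handled by the symmetric set of branch-selection inequalities, producing an analogous automaton with the same absence of nonzero-output zero-loops.

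\textbf{The main obstacle} I anticipate is the zero-input reachability analysis in part~(2a): unlike the earlier cases, where at most four digit readings sufficed, the presence of eight extra states means one must trace every possible zero-input walk through $T\to E\to\cdots\to H$ and confirm that none closes into a cycle carrying a nonzero carry $\gamma$ (the obstruction that produced nontrivial attractors in Sections~\ref{case2} and~\ref{case4}). The delicate point is that the hypotheses $\alpha+\beta>2$ and $\beta>2\alpha$ must be shown to be not merely sufficient for the relations to hold, but exactly strong enough to force each zero-input edge into the terminal region rather than into a loop; this is where the specific numerical inequalities on $a,b,c$ do the real work, and I would verify it by examining, for each of $\pm E,\pm F,\pm G,\pm H,\pm T$, which branch the digit $\nu=0$ selects and checking that the resulting target state strictly decreases some well-founded quantity until a black dot is reached.
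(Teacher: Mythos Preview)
Your plan for parts~(1) and~(2a) is essentially the paper's own argument: for~(1) the paper simply asserts that the automata coincide with those of Case~1, and for~(2a) it uses exactly the enlarged state set $\{\pm P,\dots,\pm T,\pm E,\dots,\pm H\}$ together with the zero-input termination check you describe (Figure~\ref{fig:automaton51}).

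The gap is in~(2b). You present it as ``handled by the symmetric set of branch-selection inequalities, producing an analogous automaton,'' implicitly on the same state set as~(2a). That is not what happens. When $\beta\leq 2\alpha$ the $\pm H$ relations listed before the theorem break down: for $+H$ and $0\leq \nu<2a-b$, neither $\nu-2a+b$ nor $\nu-2a$ lies in $\{0,\dots,b-1\}$, so the transducer of Figure~\ref{fig:automaton51} is no longer applicable. The paper therefore introduces two \emph{additional} state pairs $\pm I,\pm J$ (carries $\pm\bigl(A(2c,0)^\top+(2a-2c,0)^\top\bigr)$ and $\pm\bigl(A(c,0)^\top+(a+2c,0)^\top\bigr)$), rewrites the $\pm H$ transitions so that they target $\pm I$, and derives new transition relations for $\pm I$ and $\pm J$; only then does the zero-input walk analysis terminate (Figure~\ref{fig:automaton52}). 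Moreover, the hypothesis $\alpha+2<2\beta$ is not a symmetric counterpart to $\alpha+\beta>2$: together with $\beta\leq 2\alpha$ it actually \emph{implies} $\alpha+\beta>2$, so all of the~(2a) relations except those for $\pm H$ are retained, and $\alpha+2<2\beta$ is what validates the new $\pm I,\pm J$ relations. Thus~(2b) is a strict extension of~(2a), not a symmetric variant, and your proposal does not anticipate this further growth of the state set.
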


As in the preceding discussion, let $x = 1/\beta$ and $y = \alpha/\beta$ and consider the following regions in $\R^2$:
\begin{align*}
    S_1 &= \brc{(x,y) \in \R^2 \mid y > x \text{ and } 1-x \leq y \leq 1} \\
    S_2 &= \brc{(x,y) \in \R^2 \mid 0 < y \leq x, 0<1-x < y < 1, y+1 > 2x, \text{ and } 2y < 1} \\
    S_3 &= \brc{(x,y) \in \R^2 \mid 0 < y \leq x, 0<1-x < y < 1, y+2x < 2, \text{ and } 2y \geq 1}.
\end{align*}
Then $(x,y) \in S_1$ (respectively $S_2$, $S_3$) precisely when $\alpha$ and $\beta$ satisfy the conditions of Case 1 (respectively Case 2(a), Case 2(b)) in Theorem~\ref{thm:automaton51}. The graphs of regions $R_1$, $R_3$, $S_1$, $S_2$, and $S_3$ are shown in Figure~\ref{fig:regions}. Notice that all their interiors are contained in $\D_2^{(0)}$.

\begin{figure}[h]
\centering

\includegraphics[scale=0.7]{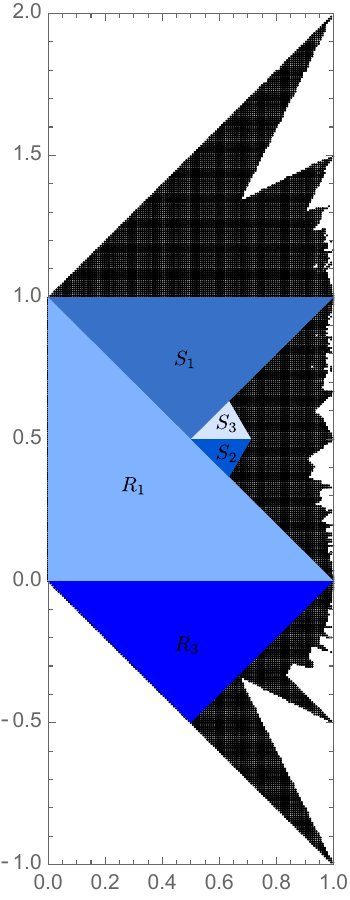}
\caption{Graphs of regions $R_1$, $R_3$, $S_1$, $S_2$, and $S_3$ inside $\D_2^{(0)}$. Every rational point in these regions corresponds to a matrix digit system with zero attractor.}
\label{fig:regions}
\end{figure}

It remains to establish Case 2(b) of Theorem \ref{thm:automaton51}. The conditions of Case 2 still hold, but we now assume that $\beta \leq 2\alpha$. As a result, the relations for states $\pm H$ no longer hold, and the transducer in Figure~\ref{fig:automaton51} is not anymore applicable. Given this, we now define the digit strings $\sigma^{\pm I}$ and $\sigma^{\pm J}$ as follows:
\begin{align*}
    (\sigma^{\pm I})_A &= (\sigma)_A \pm A(2c,0)^\top \pm (2a-2c)^\top \\
    (\sigma^{\pm J})_A &= (\sigma)_A \pm A(c,0)^\top \pm (a+2c,0)^\top.
\end{align*}

Furthermore, we also impose that $\alpha + 2 < 2\beta$. Together with the condition $\beta \leq 2\alpha$, this implies that $\alpha + \beta > 2$. Hence all previous relations hold except those for states $\pm H$. The following are the relations for states $\pm H, \pm I, \pm J$.
\begin{align*}
    (\sigma\nu)^{\pm H} &=
    \begin{cases}
        \sigma^{\pm I} \, \ol{\nu \mp 2a \pm 2b}, & 0 \leq \nu \mp 2a \pm 2b < b \\
        \sigma^{\pm G} \, \ol{\nu \mp 2a \pm b}, & 0 \leq \nu \mp 2a \pm b < b
    \end{cases} \\
    (\sigma\nu)^{\pm I} &=
    \begin{cases}
        \sigma^{\pm J} \, \ol{\nu \pm 2a \mp 2c \pm b}, & 0 \leq \nu \pm 2a \mp 2c \pm b < b \\
        \sigma^{\pm F} \, \ol{\nu \pm 2a \mp 2c}, & 0 \leq \nu \pm 2a \mp 2c < b
    \end{cases} \\
    (\sigma\nu)^{\pm J} &=
    \begin{cases}
        \sigma^{\mp Q} \, \ol{\nu \pm a \pm 2c \mp b}, & 0 \leq \nu \pm a \pm 2c \mp b < b \\
        \sigma^{\pm E} \, \ol{\nu \pm a \pm 2c \mp 2b}, & 0 \leq \nu \pm a \pm 2c \mp 2b < b
    \end{cases}
\end{align*}

The condition $\alpha + 2 < 2\beta$ is required for the relations of states $\pm I$ and $\pm J$ to hold, and $2\alpha \geq \beta$ for the states $\pm H$. The transducer that realizes addition by $(1,0)^\top$ in this case is given in Figure~\ref{fig:automaton52}. Only edge labels of the form $0 \mid k$ are displayed, and edges with such labels
are drawn as double lines. Observe that two double lines originate from state $I$, since the transition depends on whether $a=c$ or not.  As with the previous case, feeding an infinite string of the form $\sigma = 0^\infty w$ with $w$ a finite word over $\D$ into the automaton produces a path ending at a terminal state. Thus the induced output string also has the same form as $\sigma$ and hence the corresponding digit system has zero attractor. This completes the proof.

\begin{figure}[h]
\includegraphics[scale=0.9]{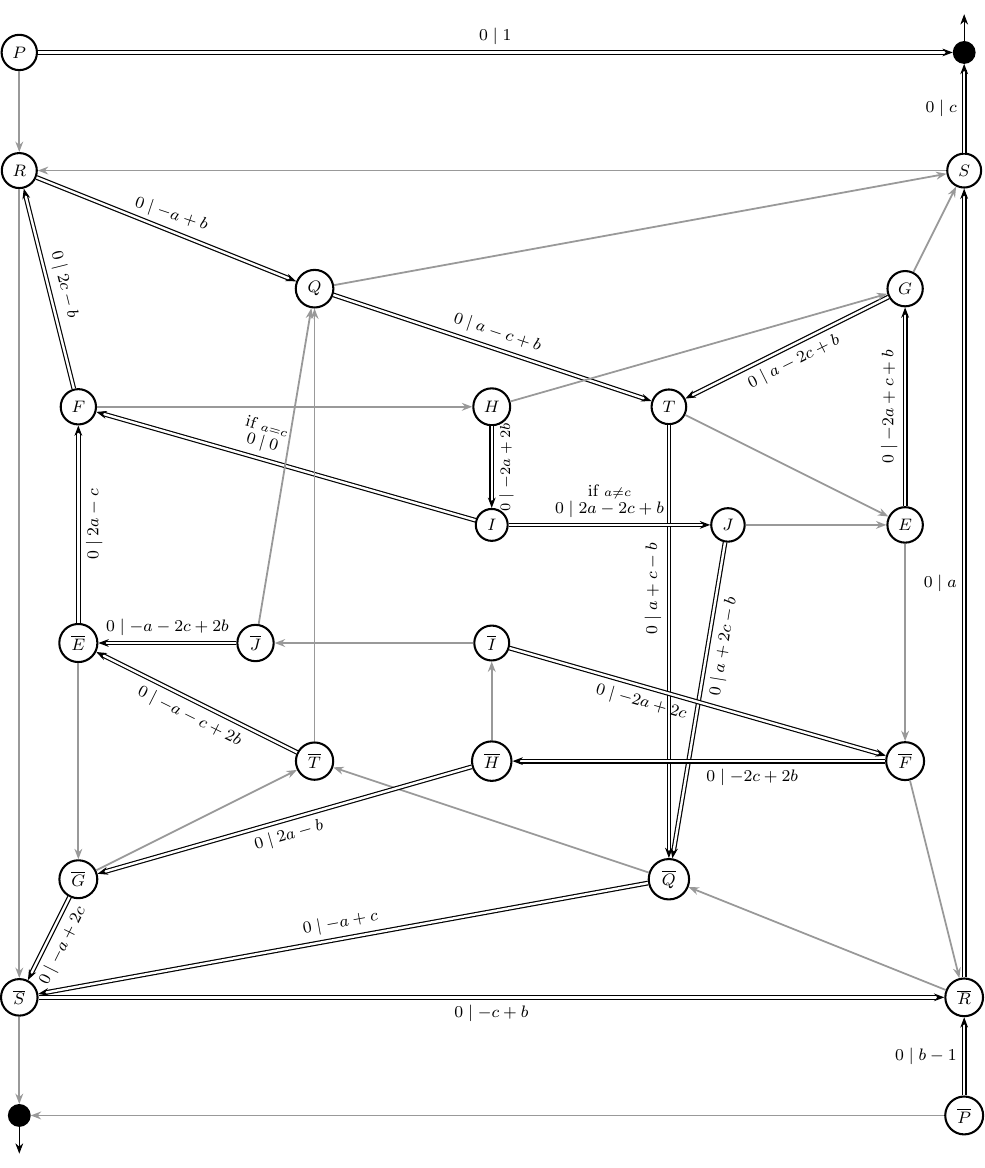}
\caption{Transducer that realizes the addition by $(1,0)^\top$ whenever $0 < \alpha \leq 1$, $0 < \beta - 1 < \alpha < \beta$, $\alpha + 2 < 2\beta$, and $\beta \leq 2\alpha$.}
\label{fig:automaton52}
\end{figure}

\end{document}